\newcommand{\scal}[2]{\langle #1,#2\rangle}
\newcommand{\rr}[1]{\mathbf R^{#1}}
\newcommand{\nm}[2]{\Vert #1\Vert _{#2}}
\newcommand{\sets}[2]{\{ {\,}#1{\,};{\,}#2{\,}\} }
\newcommand{\fy}{\varphi}
\newcommand{\cdo}{\, \cdot \, }
\newcommand{\supp}{\operatorname{supp}}
\newcommand{\eabs}[1]{\langle #1\rangle}
\newcommand{\vrum}{\vspace{0.1cm}}
\newcommand{\FL}{\mathscr F \! L}
\newcommand{\masfR}{\mathsf R}
\numberwithin{equation}{section}          
\newtheorem{thm}{Theorem}
\numberwithin{thm}{section}
\newtheorem*{tom}{\rubrik}
\newcommand{\rubrik}{}
\newtheorem{prop}[thm]{Proposition}
\newtheorem{cor}[thm]{Corollary}
\newtheorem{lemma}[thm]{Lemma}
\theoremstyle{definition}
\theoremstyle{remark}
\newtheorem{rem}[thm]{Remark}              
\title{\textbf {Young type inequalities for weighted spaces}}
\author{Joachim Toft}
\address{Department of Computer science, Mathematics and Physics,
Linn{\ae}us University, V{\"a}xj{\"o}, Sweden}
\email{joachim.toft@lnu.se}
\author{Karoline Johansson}
\address{Department of Computer science, Mathematics and Physics,
Linn{\ae}us University, V{\"a}xj{\"o}, Sweden}
\email{karoline.johansson@lnu.se}
\author{Stevan Pilipovi\' c}
\address{Department of Mathematics and Informatics,
University of Novi Sad, Novi Sad, Serbia}
\email{stevan.pilipovic@dmi.uns.ac.rs}
\author{Nenad Teofanov}
\address{Department of Mathematics and Informatics,
University of Novi Sad, Novi Sad, Serbia}
\email{nenad.teofanov@dmi.uns.ac.rs}
\keywords{Fourier, Lebesgue, modulation, sharpness}
\subjclass[2000]{44A35,42B05,46E35,46F99}
\begin{document}

\begin{abstract}
We establish sharp convolution and multiplication estimates in
weighted Lebesgue, Fourier Lebesgue and modulation spaces. Especially
we recover some results in \cite{Hrm-nonlin,PTT2}.
\end{abstract}

\maketitle

\par

\section{Introduction}\label{sec0}

\par

The aim of the paper is to establish H{\"o}lder-Young type
properties for convolution and multiplications on weighted Lebesgue,
Fourier Lebesgue and modulation spaces.

\par

A frequently used convolution property concerns Young's inequality, which in
terms of the \emph{Young functional}
\begin{equation}\label{Rqfunctional}
\masfR (p)\equiv 2-\frac 1{p_0}-\frac 1{p_1}-\frac 1{p_2},\qquad p=
(p_0,p_1,p_2)\in [1,\infty]^3,
\end{equation}
asserts that
\begin{equation}\label{UsualYoung}
L^{p_1}_{t_1}*L^{p_2}_{t_2} \subseteq L^{p_0'}_{-t_0}
\end{equation}
when $\masfR (p)=0$,
\begin{equation}\label{firstsjskcond}
t_0+t_1\ge 0,\quad t_0+t_2\ge 0,\quad \text{and}\quad t_1+t_2\ge 0.
\end{equation}
We note that the latter inequalities imply
\begin{equation}\label{Usualscond}
t_0+t_1+t_2\ge 0.
\end{equation}
Here $L^p_t$ is the weighted Lebesgue space with parameters $p$
and $t$, and consists of all measurable functions $f$ on $\rr d$ such
that $f\cdot \eabs \cdo ^t\in L^p$, where $\eabs x=(1+|x|^2)^{1/2}$.
Furthermore, if $p\in [1,\infty ]$, then $p'\in [1,\infty ]$ is the conjugate
exponent for $p$, i.{\,}e. $1/p+1/p'=1$.

\par

Especially we are interested to find conditions on $t_j$, $j=0,1,2$, such that
\eqref{UsualYoung} holds when $\masfR (p)$ in \eqref{Rqfunctional}
stays somewhere in the interval $[0,2^{-1}]$.

\par

A rough estimate is obtained by an appropriate application of H{\"o}lder's
inequality on Young's inequality, here above. More precisely, by rewriting
$f_j(x)\cdot \eabs x ^{t_j}$ into $(f_j(x)\cdot \eabs x ^{\sigma _j})
\cdot \eabs x ^{t_j-\sigma _j}$, it follows by applying H{\"o}lder's
inequality on the $L^{p_j}_{t_j}$-norms in Young's inequality that the following
result holds true.

\par

\begin{prop}\label{weakyoungextension}
Let $t_j \in \mathbf R$, $p_j \in [1,\infty] $, $j=0,1,2$ and let $\masfR (p)$
be given by \eqref{Rqfunctional}. Also assume that $0< \masfR (p) \le 1/2$,
\eqref{firstsjskcond} holds true with at least two inequalities strict, and that
\begin{equation}\tag*{(\ref{Usualscond})$'$}
t_0+t_1+t_2>d\cdot \masfR (p)
\end{equation}
holds. Then $L^{p_1}_{t_1}*L^{p_2}_{t_2} \subseteq L^{p_0'}_{-t_0}$.
\end{prop}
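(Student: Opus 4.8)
The plan is to reduce the claimed inclusion to the classical Young inequality~\eqref{UsualYoung}--\eqref{firstsjskcond} by splitting each weight $\eabs\cdo^{t_j}$ into a piece of size $\eabs\cdo^{\sigma_j}$ that participates in a balanced Young triple, and a residual piece $\eabs\cdo^{t_j-\sigma_j}$ that is absorbed through H\"older's inequality. Concretely, write $q=(q_0,q_1,q_2)\in[1,\infty]^3$ for a triple with $\masfR(q)=0$ and $q_j\le p_j$ (so that $\tfrac1{p_j}\le\tfrac1{q_j}$ and the H\"older conjugate exponents $r_j$ defined by $\tfrac1{p_j}=\tfrac1{q_j}+\tfrac1{r_j}$ lie in $[1,\infty]$); such a $q$ exists precisely because $\masfR(p)=2-\sum\tfrac1{p_j}\in(0,1/2]$ leaves room to increase each $\tfrac1{p_j}$ by a total of $\masfR(p)$, and one checks that with $\masfR(p)\le 1/2$ one can even arrange $\tfrac1{r_0}+\tfrac1{r_1}+\tfrac1{r_2}=\masfR(p)$ while keeping all $q_j\ge1$. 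Then for $f_j\in L^{p_j}_{t_j}$ I would factor $f_j=g_j\cdot h_j$ with $g_j=f_j\,\eabs\cdo^{\,\sigma_j}\cdot\eabs\cdo^{-t_j}\cdots$ — more carefully, set $g_j := f_j\,\eabs\cdo^{\,t_j}\eabs\cdo^{-\sigma_j+\sigma_j}$; the point is to choose $\sigma_j$ so that $g_j\in L^{q_j}_{\sigma_j}$ and $\eabs\cdo^{\,t_j-\sigma_j}\in L^{r_j}$, the latter requiring $r_j(\sigma_j-t_j)>d$, i.e. $\sigma_j<t_j-d/r_j$ when $r_j<\infty$ and $\sigma_j\le t_j$ when $r_j=\infty$.

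The core of the argument is then a bookkeeping step: apply H\"older in each variable to get $\nm{f_j}{L^{p_j}_{t_j}}\lesssim \nm{g_j}{L^{q_j}_{\sigma_j}}$ uniformly (using $\eabs\cdo^{\,t_j-\sigma_j}\in L^{r_j}$), apply classical Young~\eqref{UsualYoung} to $g_1*g_2\subseteq L^{q_0'}_{-\sigma_0}$ — which needs $\masfR(q)=0$ together with \eqref{firstsjskcond} for the $\sigma_j$, i.e. $\sigma_0+\sigma_1\ge0$, $\sigma_0+\sigma_2\ge0$, $\sigma_1+\sigma_2\ge0$ — and finally convert back: since $p_0'\ge q_0'$ one does \emph{not} get the target norm for free, so instead one runs the H\"older splitting in reverse on the output, writing the $L^{p_0'}_{-t_0}$-norm of $g_1*g_2$ in terms of its $L^{q_0'}_{-\sigma_0}$-norm times $\eabs\cdo^{\sigma_0-t_0}\in L^{\tilde r_0}$ with $\tfrac1{p_0'}=\tfrac1{q_0'}+\tfrac1{\tilde r_0}$, i.e. $\tfrac1{\tilde r_0}=\tfrac1{q_0}-\tfrac1{p_0}=\tfrac1{r_0}$, so $\tilde r_0=r_0$ and the condition is again $r_0(\sigma_0-(-t_0)\text{-type})$; cleanly, one needs $\eabs\cdo^{\,\sigma_0-t_0}$... — the upshot is a single integrability requirement $r_j|t_j-\sigma_j|$-type on each index, which after summing the three constraints $\sigma_j<t_j-d/r_j$ (with the sign conventions matching \eqref{firstsjskcond}) and using $\sum\tfrac1{r_j}=\masfR(p)$ together with $\sum\sigma_j\ge0$ from the three pairwise inequalities, forces exactly $t_0+t_1+t_2>d\,\masfR(p)$, which is hypothesis~(\ref{Usualscond})$'$. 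The two-strict-inequalities assumption in \eqref{firstsjskcond} for the $t_j$ is what gives the slack to choose the $\sigma_j$ satisfying the pairwise inequalities \eqref{firstsjskcond} strictly where needed, so that the open conditions $\sigma_j<t_j-d/r_j$ can be met simultaneously.

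The main obstacle I anticipate is the simultaneous feasibility of the linear system in the unknowns $(\sigma_j,1/r_j,1/q_j)$: one must pick $1/r_j\ge0$ with $\sum 1/r_j=\masfR(p)$, then $1/q_j=1/p_j+1/r_j\in(0,1]$ (so $\masfR(q)=0$ automatically), then $\sigma_j$ with $\sigma_j\le t_j$, $\sigma_j<t_j-d/r_j$ (when $r_j<\infty$), the three pairwise sums $\sigma_i+\sigma_j\ge0$, and nothing else. A Farkas/linear-programming type argument — or an explicit symmetric choice such as $1/r_j=\masfR(p)/3$ combined with $\sigma_j=t_j-d\,\masfR(p)/3-\ep$ for small $\ep>0$, checking the pairwise inequalities via \eqref{firstsjskcond} and (\ref{Usualscond})$'$ — should close this; the role of ``at least two inequalities strict'' and of the strict inequality in (\ref{Usualscond})$'$ is precisely to absorb the $\ep$. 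Once feasibility is granted, the rest is three applications of H\"older and one of Young, with the implicit constant depending only on $d$, $p$, $t$.
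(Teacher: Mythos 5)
Your plan reproduces the heuristic the paper itself sketches just before the statement (split each weight, embed $L^{p_j}_{t_j}\subseteq L^{q_j}_{\sigma_j}$ by H\"older, apply weighted Young for a triple $q$ with $\masfR (q)=0$, embed back), but the step you defer --- feasibility of the choice of $(1/r_j,\sigma _j)$ --- is not a routine linear-programming check: it genuinely fails for parameters allowed by the hypotheses. Take $d=1$, $p_0=p_1=p_2=2$ (so $\masfR (p)=1/2$) and $t_0=-5$, $t_1=5$, $t_2=5.1$. Then \eqref{firstsjskcond} holds with two strict inequalities and $t_0+t_1+t_2=5.1>d\cdot \masfR (p)$, so the proposition applies (and its conclusion is true; it is a special case of Proposition \ref{weakyoungextension}$'$, i.e.\ of Theorem \ref{proptogether}). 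But in your scheme the pair $t_0+t_1=0$ forces $1/r_0=1/r_1=0$ and $\sigma _0=-5$, $\sigma _1=5$ (any positive H\"older loss on the indices $0,1$ gives $\sigma _0+\sigma _1<0$), hence $1/r_2=\masfR (p)$ and $\sigma _2<t_2-d\cdot \masfR (p)=4.6$, so that $\sigma _0+\sigma _2<-0.4<0$ and the pairwise condition needed for weighted Young cannot be met. No choice of exponents and weights repairs this: in a single global H\"older step the whole loss $d\cdot \masfR (p)$ must be charged to individual functions, and then each pairwise sum $t_i+t_j$ --- which the hypotheses only require to be nonnegative --- must strictly exceed the loss charged to that pair, whereas the hypotheses only control the total $t_0+t_1+t_2$. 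Your explicit fallback $1/r_j=\masfR (p)/3$, $\sigma _j=t_j-d\cdot \masfR (p)/3-\ep$ fails even more visibly: it needs $t_i+t_j>\tfrac 23 d\cdot \masfR (p)$ for all pairs and $1/p_j+\masfR (p)/3\le 1$ for all $j$, neither of which is implied (the latter already fails when some $p_j=1$). There is also a sign slip: you need $\nm {f_j}{L^{q_j}_{\sigma _j}}\lesssim \nm {f_j}{L^{p_j}_{t_j}}$ via $\eabs \cdo ^{\sigma _j-t_j}\in L^{r_j}$, not the reversed inequality you wrote.

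So as written your argument proves only a weaker statement in which the loss can be distributed compatibly with the pairwise conditions (all $t_i+t_j$ sufficiently positive, and the caps $1/p_j+1/r_j\le 1$ respected). To reach the proposition as formulated one must let the place where the decay is spent depend on the relative sizes of $|x|$, $|y|$, $|x-y|$: this is exactly what the paper's machinery does, cutting the kernel $F(x,y)=\eabs x^{-t_0}\eabs {x-y}^{-t_1}\eabs y^{-t_2}$ along the regions $\Omega _1,\dots ,\Omega _5$ in \eqref{theomegasets} and estimating each piece $T_{F_j}$ separately (Proposition \ref{prop1} together with Lemmas \ref{intestimates}--\ref{estimatesT_F}); the sum condition \eqref{lastineq2B} is only used on $\Omega _4,\Omega _5$, while the pairwise conditions enter on the remaining regions. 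Either strengthen the hypotheses to what your H\"older--Young reduction actually needs, or route the proof through the $T_F$-decomposition (equivalently, deduce the proposition from Theorem \ref{proptogether}).
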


\par

We remark that Proposition \ref{weakyoungextension} holds true also after
removing the condition $\masfR (q) \le 1/2$.

\par

In contrast to Young's inequality here above, the most of the inequalities in
\eqref{firstsjskcond} and \eqref{Usualscond}$'$ in Proposition
\ref{weakyoungextension} are strict, and if it is possible to replace
any such strict inequality by a non-strict one, then the situation is improved.
On the other hand, it seems not to be possible to perform such improvement
by only using H{\"o}lder's inequality in such simple way as described here
above.

\par

In this paper we use the framework in Chapter 8 in \cite{Hrm-nonlin}
and Section 3 in \cite{PTT2} to decompose the involved functions in the
convolutions in convenient ways. These investigations lead to
Theorem \ref{proptogether} in Section \ref{sec2}, which in particular gives
the following improvement of Proposition \ref{weakyoungextension}.

\par

\renewcommand{\rubrik}{Proposition \ref{weakyoungextension}$'$}

\begin{tom}
Let $t_j \in \mathbf R$, $p_j \in [1,\infty] $, $j=0,1,2$ and let $\masfR (p)$
be given by \eqref{Rqfunctional}. Also assume that $0< \masfR (p) \le 1/2$,
\eqref{firstsjskcond} holds true, and that
\begin{equation}\tag*{(\ref{Usualscond})$''$}
t_0+t_1+t_2\ge d\cdot \masfR(p)
\end{equation}
holds, with strict inequality in \eqref{Usualscond}$''$
when $t_j=d\cdot  \masfR (p) $ for some $j=0,1,2$. Then $L^{p_1}_{t_1}*
L^{p_2}_{t_2} \subseteq L^{p_0'}_{-t_0}$.
\end{tom}

\par

Furthermore, if $t_j\neq d\cdot \masfR (p)$, then we prove that
Proposition \ref{weakyoungextension}$'$ is optimal in the sense that if
\eqref{firstsjskcond} or \eqref{Usualscond}$''$ are violated, then
$L^{p_1}_{t_1}*L^{p_2}_{t_2}$ is \emph{not} continuously embedded in
$L^{p_0'}_{-t_0}$.

\par

Obviously, except for a few cases, the strict inequalities in Proposition
\ref{weakyoungextension} have been replaced by non-strict ones in
Proposition \ref{weakyoungextension}$'$. The H{\"o}rmander
theorem \cite[Theorem 8.3.1]{Hrm-nonlin} on microlocal regularity of a
product is obtained by choosing $p_0=p_1=p_2=2$
in Proposition \ref{weakyoungextension}$'$, and note that in
contrast to Proposition \ref{weakyoungextension}$'$, the latter
theorem is \emph{not} covered by Proposition \ref{weakyoungextension}.
We remark that the results in
\cite{Hrm-nonlin} are given in the framework of  weighted Sobolev
spaces of the form $H^2_s$, and the analysis is based on an intensive
use of their Hilbert space structure.
On the other hand, here (as well as in  \cite{PTT2}) our result considerations
include Banach spaces which might not be Hilbert spaces, and
thereby use a more sophisticated techniques in the proofs are needed.

\par

Finally we remark that Theorem \ref{proptogether} leads to Theorem
\ref{proptogetherMod} in Section \ref{sec2}, which concerns convolution
properties for modulation spaces. In particular, if $t_j$, $p_j$ and $\masfR (p)$
are the same as in Proposition \ref{weakyoungextension}$'$, and that
$$
\frac 1{q_0}+\frac 1{q_1}+\frac 1{q_2} =1,\quad \text{and}\quad
0\le s_0+s_1+s_2,
$$
then it follows from Theorem \ref{proptogetherMod} $M^{p_1,q_1}_{s_1,t_1}*
M^{p_2,q_2}_{s_2,t_2}\subseteq M^{p_0',q_0'}_{s_0,t_0}$.

\par

\section*{Acknowledgment}

\par

This research is supported by Ministry of Education, Science and
Technological Development of Serbia through the Project no.
174024.

\par

%
\section{Preliminaries}\label{sec1}

\par

In this section we review notions and notation, and
discuss basic preliminary results. We put $\mathbf N =\{0,1,2,\dots \}$,
and $A\lesssim B$ to indicate $A\leq c B$ for a suitable constant $c>0$.
Any extension of the $L^2$-scalar product on $C^\infty _0(\rr d)$
is denoted by $(\cdo  , \cdo )_{L^2} = (\cdo  , \cdo )$.

\par

The scalar product of $x$ and $\xi$  in $ \rr d $ is denoted by $\langle x,\xi \rangle$.
For $p \in [1,\infty]$ we let $p'\in [1,\infty ]$ denote the conjugate
exponent ($1/p+1/p'=1$).

\par

The Fourier transform $\mathscr F$ is the operator on $\mathscr S'(\rr d)$
which takes the form
$$
(\mathscr Ff)(\xi )= \widehat f(\xi ) \equiv (2\pi )^{-d/2}\int
f(x)e^{-i\scal  x\xi }\, dx, \qquad \xi \in \rr d,
$$
when $f\in L^1(\rr d)$.

\par

The (weighted) Fourier Lebesgue space $\mathscr FL^q_s(\rr d)$,
$s\in \mathbf R$ is the Banach space which consists of all $f\in \mathscr
S'(\rr d)$ such that
\begin{equation}\label{FLnorm}
\nm f{\mathscr FL^{q}_s} \equiv \nm {\widehat f\cdot \eabs \cdo ^s
}{L^q}
\end{equation}
is finite. Here and in what follows, $\eabs \xi =(1+|\xi |^2)^{1/2}$.

\par

Let $X $ be an open set in $ \rr d$. Then the \emph{local} Fourier
Lebesgue space $\mathscr FL^q_{s,loc}(X)$ consists of all
$f\in \mathscr D'(X)$ such that $\fy f\in \mathscr FL^q_{s}(\rr d)$ for
every $\fy \in C_0^\infty (X)$. The topology in
$\mathscr FL^q_{s,loc}(X)$  is defined by the family of
seminorms $f\mapsto  \nm {\fy f}{\mathscr FL^q_{(\omega )}}$,
where $\fy \in C_0^\infty (X)$.

\par

We note that
\begin{equation}\label{FLqFLqlocemb}
\mathscr FL^q_s(\rr d) \Big |_X \subseteq \mathscr FL^q_{s,loc}(X).
\end{equation}
and
\begin{equation}\label{incrFLloc}
\mathscr FL^{q_1}_{s_1,loc}(X)\subseteq \mathscr
FL^{q_2}_{s_2,loc}(X), \;\; \text{when} \ q_1\le q_2 \
\text{and} \ s_2\le s_1.
\end{equation}
(See e.{\,}g. \cite{PTT2}.)

\medspace

Next we define modulation spaces. Let $\phi \in \mathscr S'(\rr d)\setminus 0$
be fixed. Then the short-time Fourier transform of $f\in \mathscr S'(\rr d)$ with
respect to $\phi$ is defined by
$$
(V_\phi f)(x,\xi ) =\mathscr F(f\cdot \overline {\phi (\cdo -x)})(\xi ).
$$
Here the left-hand side makes sense, since it is the partial
Fourier transform of the tempered distribution
$ F(x,y)=(f\otimes \overline \phi)(y,y-x) $
with respect to the $y$-variable. We also note that if $f,\phi \in
\mathscr S(\rr d)$, then $V_\phi f$ takes the form
\begin{equation}\label{stftformula}
V_\phi f(x,\xi ) = (2\pi )^{-d/2}\int f(y)\overline {\phi
(y-x)}e^{-i\scal y\xi}\, dy.
\end{equation}

\par

Let $s,t\in \mathbf R$ and $p,q\in [1,\infty]$  be fixed. Then the modulation
space $M^{p,q}_{s,t}(\rr d)$ consists of all $f\in \mathscr S'(\rr d)$
such that
$$
\nm f{M^{p,q}_{s,t}}\equiv \left ( \int _{\rr d} \left ( \int _{\rr d} |V_\phi f(x,\xi )\eabs x^t
\eabs \xi ^s|^p\, dx  \right )^{q/p}d\xi  \right )^{1/q}
$$
is finite (with obvious interpretation of the integrals when $p=\infty$ or $q=\infty$). In the same
way, the modulation space $W^{p,q}_{s,t}(\rr d)$ consists of all $f\in \mathscr S'(\rr d)$
such that
$$
\nm f{W^{p,q}_{s,t}}\equiv \left ( \int _{\rr d} \left ( \int _{\rr d} |V_\phi f(x,\xi )\eabs x^t
\eabs \xi ^s|^q\, d\xi  \right )^{p/q}dx  \right )^{1/p}
$$
is finite.

\par

%
%

\section{Multiplication and convolution
properties}\label{sec2}

\par

In this section we derive multiplication and convolution results on Lebesgue,
Fourier Lebesgue and modulation spaces. In particular, we extend some
results in \cite{PTT2}. The proofs of the theorems
are postponed to Section \ref{sec4}.
Our main results are Theorems \ref{proptogether} and \ref{proptogetherMod}.
Here we present sufficient conditions on
$t_j\in \mathbf R$ and $p_j\in [1,\infty]$, $j=0,1,2$, to ensure that $v_1*v_2\in
L^{p_0'} _{-t_0} $ when $v_j \in L^{p_j} _{t_j}$,
$ j = 1,2$, and similarly when the convolution product and Lebesgue spaces are
replaced by multiplication and Fourier-Lebesgue spaces. The results also include
related multiplication and convolution properties for modulation spaces.

\par

%
%

Certain parts of the analysis concerns reformulation of $0\le \masfR
(p)\le 1/2$ into equivalent statements. For convenience we let 
\begin{equation}\label{Gdef}
G(x)=G(x_0,x_1,x_2)= 2-\sum _{j=0}^2 x _j,
\end{equation}
and note that $\masfR (p)$ is equal to $G(x)$ when $x_j=1/p_j$.
We also let
\begin{align}
H_0(x) &= \max _{\pi \in S_3} \left (
\min \left ( x_{\pi (0)},\max \left ( \frac 12,
\min (x_{\pi (1)},x_{\pi (2)})\right ) \right ) \right ).\label{H0def}
\\[1ex]
H_1(x) &=
\left \{
\begin{matrix}
\max (x_0,x_1,x_2),&\quad &x_0,x_1,x_2< \frac 12,
\\[1ex]
\min (x_0,x_1,x_2),&\quad &x_0,x_1,x_2 > \frac12,
\\[1ex]
\frac 12,\phantom{\min x_0,x_1,x_2}&\quad &\text{otherwise},
\end{matrix}
\right .\label{H1def}
\intertext{and}
H_2(x) &= \max \left ( \frac 12,\min (x_0,x_1,x_2)\right ),\label{H2def}
\end{align}
Here $S_3$ is the permutations of $\{ 0,1,2\}$. The following lemma justifies
the introduction of the functions $H_j(x)$, $j=0,1,2$, in
\eqref{H0def}--\eqref{H2def}.

\par

\begin{lemma}\label{1/2}
Let $x=(x_0,x_1,x_2)$, and let $G(x)$ and $H_l(x)$,
$l=0,1,2$, be given by \eqref{Gdef}--\eqref{H2def}. Then $H_0(x)=H_1(x)$.
Furthermore, if $l\in \{ 0,1,2\}$, then the following conditions are
equivalent.
\begin{enumerate}
\item $\displaystyle{0\le G(x)\le \frac 12}$;

\vrum

\item $\displaystyle{0\le G(x)\le H_l(x)}$.
\end{enumerate}
\end{lemma}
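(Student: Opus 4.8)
The first task is to establish $H_0(x)=H_1(x)$ as an identity. The plan is to evaluate the nested max-min expression in \eqref{H0def} by cases according to the ordering of $x_0,x_1,x_2$ relative to $1/2$. Since $H_0$ is symmetric in the three variables (the outer $\max$ runs over all of $S_3$), one may assume $x_0\le x_1\le x_2$. For a fixed permutation $\pi$, the inner quantity is $\min(x_{\pi(0)},\max(1/2,\min(x_{\pi(1)},x_{\pi(2)})))$; I would note that for each unordered pair $\{x_{\pi(1)},x_{\pi(2)}\}$ the value depends only on $\min(x_{\pi(1)},x_{\pi(2)})$ and $x_{\pi(0)}$, so the six permutations collapse to three candidate values, namely $\min(x_j,\max(1/2,\min\{x_k : k\ne j\}))$ for $j=0,1,2$. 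Taking the max of these three over the three sub-cases $x_2<1/2$, then $x_0>1/2$, then the mixed case, one checks directly that the result equals $\max(x_0,x_1,x_2)$, $\min(x_0,x_1,x_2)$, and $1/2$ respectively, matching \eqref{H1def}. This is a finite and essentially mechanical case check; the only mild subtlety is organizing the cases so that the ``otherwise'' branch (some $x_j\le 1/2\le x_k$) is handled uniformly.

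For the equivalence of (1) and (2), by the identity just proved it suffices to treat a single convenient representative among $H_0,H_1,H_2$; I would actually argue directly that $0\le G(x)\le 1/2$ is equivalent to $0\le G(x)\le H_l(x)$ for \emph{each} $l$, which, since $H_l(x)\le\text{(something)}$, reduces to showing: if $0\le G(x)\le 1/2$ then $G(x)\le H_l(x)$ for all $l$, and conversely the reverse containments are trivial because $H_l(x)\le 1/2$ exactly when... — here one must be careful, since $H_l(x)$ is \emph{not} always $\le 1/2$. So the cleaner route is: the reverse implication (2)$\Rightarrow$(1) requires showing $0\le G(x)\le H_l(x)$ forces $G(x)\le 1/2$, i.e. one needs $G(x)\le H_l(x)$ together with $G(x)\ge 0$ to imply $G(x)\le 1/2$. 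The forward implication (1)$\Rightarrow$(2) requires $0\le G(x)\le 1/2$ to imply $G(x)\le H_l(x)$.

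The mechanism for both directions is the same inequality, which I expect to be the heart of the argument: \emph{whenever $0\le G(x)\le 1/2$, one has $G(x)\le H_l(x)$}, and moreover \emph{whenever $G(x)>1/2$, one has $G(x)>H_l(x)$ (or $G(x)<0$)}. Using $G(x)=2-x_0-x_1-x_2$ and, say, the formula \eqref{H2def} for $l=2$, the claim $G(x)\le H_2(x)=\max(1/2,\min(x_0,x_1,x_2))$ splits: if $\min(x_j)\le 1/2$ we need $2-x_0-x_1-x_2\le 1/2$, i.e. $x_0+x_1+x_2\ge 3/2$; if $\min(x_j)>1/2$ we need $2-x_0-x_1-x_2\le \min(x_j)$. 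One shows both follow from $G(x)\le 1/2$ by elementary manipulation (in the first subcase $G(x)\le 1/2$ is literally $x_0+x_1+x_2\ge 3/2$; in the second, $\min(x_j)>1/2$ already gives $x_0+x_1+x_2>3/2$ hence $G(x)<1/2<\min(x_j)$ — wait, one must instead use that two of the $x_j$ exceed $1/2$ and bound the sum of those two below by $1$, leaving $2-x_0-x_1-x_2\le 1-x_{\min}\le x_{\min}$ iff $x_{\min}\ge 1/2$). The same bookkeeping against \eqref{H1def} handles the $l=1$ case and, via $H_0=H_1$, the $l=0$ case.

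The main obstacle is purely organizational: keeping the case division clean enough that the symmetric-function identity $H_0=H_1$ and the three equivalences all fall out of one small table of cases (the three orderings of $x_0,x_1,x_2$ against $1/2$), rather than a proliferation of sub-sub-cases. No analysis is involved — every step is a comparison of real numbers — so once the case structure is fixed the verification is routine.
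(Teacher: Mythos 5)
Your plan is correct and in essence coincides with the paper's own argument: the identity $H_0=H_1$ is checked by the same case analysis on the positions of the $x_j$ relative to $\tfrac 12$, and both directions of the equivalence rest on the same elementary fact that $\min (x_0,x_1,x_2)>\tfrac 12$ forces $G(x)<\tfrac 12$ (so $G(x)>\tfrac 12$ gives $H_l(x)\le \tfrac 12<G(x)$, while $0\le G(x)\le \tfrac 12$ gives $G(x)\le H_l(x)$). The only difference is organizational: the paper routes the equivalence through $l=2$ and then shows condition (2) is independent of $l$, whereas you verify each $l$ directly from the same case table.
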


\par

\begin{proof}
We begin to prove $H_0(x)=H_1(x)$. We have $H_0(x)=\max
(y_0,y_1,y_2)$, where
\begin{align*}
y_0 &= \min \left ( x_0,\max \left ( \frac 12, \min (x_1,x_2) \right ) \right ),
\\[1ex]
y_1 &= \min \left ( x_1,\max \left ( \frac 12, \min (x_0,x_2) \right ) \right ),
\\[1ex]
y_2 &= \min \left ( x_2,\max \left ( \frac 12, \min (x_0,x_1) \right ) \right ).
\end{align*}
If $x_j\le 1/2$, then $y_j=x_j$, $j=0,1,2$, giving that
$$
H_0(x) =\max (x_0,x_1,x_2) =H_1(x)
$$
in this case.

\par

If instead $x_j\ge 1/2$, then $y_j=\min (x_0,x_1,x_2)$, $j=0,1,2$, giving that
$$
H_0(x) =\min (x_0,x_1,x_2) =H_1(x),
$$
in this case as well.

\par

Next assume that $x_j>1/2$ and $x_k<1/2$, for some choices of
$j,k\in \{ 0,1,2\}$. By reasons of symmetry we may assume that
$x_0=\min (x_0,x_1,x_2)<1/2$ and $x_1 =\max (x_0,x_1,x_2)>1/2$. Then
$$
H_1(x)=\frac 12,\quad y_0=x_0,\quad y_1=\frac 12\quad \text{and}\quad
y_2=\min \left ( x_2,\frac 12\right ) \le \frac 12.
$$
Hence,
$$
H_0(x) =\max (y_0,y_1,y_2) =\frac 12 = H_1(x).
$$
which shows that $H_0(x)=H_1(x)$ for all $x$.

\par

It remains to prove the equivalence between (1) and (2). It is obvious that (2) with
$l=1$ or (1) implies (2) with $l=2$. Next assume that (2) with $l=2$ holds but not (1).
Then $G(x)>1/2$ and $H_2(x)>1/2$, which implies that
$\min \{ x_0,x_1,x_2 \} >1/2$. This gives
$$
G(x)= 2-\sum _{j=0}^2 x _j <2-\frac{3}{2} = \frac{1}{2},
$$
which is a contradiction. Hence (2) with $l=2$ implies (1), and we have proved the
equivalence between (1) and (2) when $l=2$.

\par

Since $H_0(x)=H_1(x)=H_2(x)$ when $x_j\ge 1/2$ for some $j=0,1,2$, it
suffices to consider the case
$x_j<1/2$, $j=0,1,2$, when proving that (2) is invariant under the choice of $l=0,1,2$.
Then by the first part of the proof we have $H_0(x)=H_1(x)<1/2$, $H_2(x)=1/2$ and
$$
G(x)=2-\sum _{j=0}^2 x_j >1/2.
$$
Hence (2) is violated in this case for any $j\in \{ 0,1,2\}$. This proves the
invariance of (2) under the choice of $j$, and the proof is complete.
%
%
%
\end{proof}

\par

In the main results here below we consider convolutions between elements
in weighted Lebesgue and modulation spaces, and multiplications between
elements in (weighted) Fourier-Lebesgue spaces.
For the convolution results, the parameters on the weights
should satisfy
\begin{align}
0&\leq t_j+t_k,  \qquad j,k=0,1,2,  \quad j\neq k,\label{lastineq2A}
\\[1ex]
0 &\leq t_0+t_1 + t_2 - d \cdot  \masfR (p),
\label{lastineq2B}
\intertext{and}
0 &\le s_0+s_1+s_2.\label{lastineq2C}
\end{align}
(Cf. \eqref{firstsjskcond} and \eqref{Usualscond}$'$.)
If the convolution is replaced by multiplication, then the roles for $p_j$ and
$q_j$, and for $s_j$ and $t_j$ are interchanged. Therefore,
\eqref{lastineq2A}--\eqref{lastineq2C}
should be replaced by
\begin{align}
0&\leq s_j+s_k,  \qquad j,k=0,1,2,  \quad j\neq k,\tag*{(\ref{lastineq2A})$'$}
\\[1ex]
0 &\leq s_0+s_1 + s_2 - d \cdot  \masfR (q),
\tag*{(\ref{lastineq2B})$'$}
\intertext{and}
0 &\le t_0+t_1+t_2,\tag*{(\ref{lastineq2C})$'$}
\end{align}
when the Lebesgue parameters are $q$ and $q_j$
instead of $p$ and $p_j$, respectively.
%

\par

\begin{thm}\label{proptogether}
Let $s_j,t_j \in \mathbf R$, $p_j,q_j \in
[1,\infty] $, $j=0,1,2$ and let $\masfR $ be the functional in
\eqref{Rqfunctional}. Then the following is true:
\begin{enumerate}
\item Assume that $0\le \masfR (p) \le 1/2$, and that
\eqref{lastineq2A} and \eqref{lastineq2B}
hold true with strict inequality in \eqref{lastineq2B}
when $\masfR (p)>0$ and $s_j=d\cdot  \masfR (p) $
for some $j=0,1,2$. Then the map $(f_1,f_2)\mapsto f_1*f_2$
on $C_0^\infty (\rr d)$ extends uniquely to a continuous map from
$ L^{q_1} _{t_1}(\rr d) \times  L^{q_2} _{t_2}(\rr d)$ to $L^{q_0'} _{-t_0}(\rr d)$;

\vrum

\item Assume that $0\le \masfR (q) \le 1/2$, and that \eqref{lastineq2A}$'$ and
\eqref{lastineq2B}$'$ hold true with strict inequality in \eqref{lastineq2B}$'$
when $\masfR (q)>0$ and $s_j=d\cdot  \masfR (q) $
for some $j=0,1,2$. Then the map $(f_1,f_2)\mapsto f_1\cdot f_2$ on $C_0
^\infty (\rr d)$ extends uniquely to a continuous map from $ \mathscr F
L^{q_1} _{s_1}(\rr d)\times  \mathscr F L^{q_2} _{s_2}(\rr d)$ to $\mathscr F
L^{q_0'} _{-s_0}(\rr d)$.
\end{enumerate}
\end{thm}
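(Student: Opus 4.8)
The plan is to prove parts (1) and (2) of Theorem~\ref{proptogether} simultaneously, since the second statement reduces to the first by conjugation with the Fourier transform: if $f_j\in\mathscr FL^{q_j}_{s_j}$ then $\widehat f_j\in L^{q_j}_{s_j}$, and $\mathscr F(f_1\cdot f_2)=(2\pi)^{-d/2}\widehat f_1*\widehat f_2$, so the multiplication estimate on Fourier--Lebesgue spaces with weight parameters $s_j$ is exactly the convolution estimate on Lebesgue spaces with the weight parameters $s_j$ playing the role of the $t_j$. Hence it suffices to establish the convolution statement (1), and by density it is enough to prove the quantitative bound $\norm{f_1*f_2}_{L^{q_0'}_{-t_0}}\lesssim\norm{f_1}_{L^{q_1}_{t_1}}\norm{f_2}_{L^{q_2}_{t_2}}$ for $f_1,f_2\in C_0^\infty(\rr d)$.

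The core idea, following Chapter~8 of \cite{Hrm-nonlin} and Section~3 of \cite{PTT2}, is a dyadic decomposition of each $f_j$ adapted to the weight. Choose a Littlewood--Paley type partition of unity and write $f_j=\sum_{k\ge 0} f_{j,k}$, where $f_{j,k}$ is supported where $\eabs x\sim 2^k$ (for $k\ge1$) or $\eabs x\lesssim 1$ (for $k=0$); on each annulus the weight $\eabs x^{t_j}$ is comparable to $2^{kt_j}$, so $\norm{f_{j,k}}_{L^{q_j}}\sim 2^{-kt_j}c_{j,k}$ with $(c_{j,k})_k\in\ell^{q_j}$ of norm $\sim\norm{f_j}_{L^{q_j}_{t_j}}$. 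Then $f_1*f_2=\sum_{k_1,k_2} f_{1,k_1}*f_{2,k_2}$, and the key point is that $f_{1,k_1}*f_{2,k_2}$ is supported where $\eabs x\lesssim 2^{\max(k_1,k_2)}$, while on the region $\eabs x\sim 2^{k_0}$ its contribution to the $L^{q_0'}_{-t_0}$ norm is controlled by $2^{-k_0 t_0}$ times an $L^{q_0'}$ estimate. To estimate $\norm{f_{1,k_1}*f_{2,k_2}}_{L^{q_0'}}$ one applies the ordinary Young inequality $L^{q_1}*L^{q_2}\subseteq L^{q_0'}$, which is available because $\masfR(q)=\masfR(p)\ge0$ means $1/q_0'\le 1/q_1+1/q_2$, at the cost of a factor $2^{d k_{\mathrm{med}}\masfR(p)}$ coming from Hölder on the overlap (the measure of the support), where $k_{\mathrm{med}}$ is an intermediate index; the conditions \eqref{lastineq2A}--\eqref{lastineq2B} on the $t_j$ are precisely what make the resulting geometric-type sums over $(k_0,k_1,k_2)$ converge, and Lemma~\ref{1/2} is what lets one choose the decomposition so that the exponent $d\cdot\masfR(p)$ is distributed against the ``right'' index (the one whose weight parameter is not the borderline value), which is where the refined function $H_l$ enters.

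I would organize the summation along the lines of the three orderings of $k_0,k_1,k_2$, reducing by symmetry to the case $k_1\le k_2$ and then splitting according to whether $k_0\le k_1\le k_2$, $k_1\le k_0\le k_2$, or $k_1\le k_2\le k_0$; in each regime one of the three sums $t_0+t_1$, $t_0+t_2$, $t_1+t_2$ governs the decay in the two ``free'' indices and the hypothesis \eqref{lastineq2B} governs the remaining diagonal direction. The summation against $\ell^{q_j}$ sequences is carried out with Young's inequality for sequences (or Schur's test), using that strict inequalities in \eqref{lastineq2A} give honest geometric decay, while an equality in \eqref{lastineq2A} produces a bounded-overlap sum that still lies in the correct $\ell^{q_0'}$ space, and likewise for \eqref{lastineq2B}; the one case needing genuine care is equality in \eqref{lastineq2B} combined with some $s_j$ (here $t_j$) equal to $d\cdot\masfR(p)$, which is exactly the configuration the theorem excludes by demanding strict inequality there.

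The main obstacle I anticipate is the endpoint bookkeeping: making the dyadic estimate for $\norm{f_{1,k_1}*f_{2,k_2}}_{L^{q_0'}}$ sharp enough that the factor $2^{d k_{\mathrm{med}}\masfR(p)}$ is attached to the optimal index, and then verifying that the three-fold sum converges in $\ell^{q_0'}$ under the non-strict hypothesis \eqref{lastineq2B} rather than the strict \eqref{Usualscond}$'$ of Proposition~\ref{weakyoungextension}. This is where one must avoid a naive application of Hölder and instead exploit the convolution structure (the support restriction $\supp(f_{1,k_1}*f_{2,k_2})\subseteq\{\eabs x\lesssim 2^{\max(k_1,k_2)}\}$ together with a dual estimate giving decay for $\eabs x$ much smaller than $2^{\max(k_1,k_2)}$), so that two indices, not one, can be summed with only borderline behaviour absorbed. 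Once the correct per-block estimate is in place, the remaining sums are routine applications of Young's/Schur's inequality for sequences, and Lemma~\ref{1/2} supplies the algebraic identity guaranteeing the hypotheses can be met in the form needed.
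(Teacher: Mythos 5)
Your reduction of part (2) to part (1) via the Fourier transform is exactly the paper's step, and your dyadic decomposition is a reasonable discrete analogue of the paper's splitting of $\rr {2d}$ into the regions $\Omega _1,\dots ,\Omega _5$ of \eqref{theomegasets}. The gap is in the per-block estimate, which is precisely where all the work in the paper lies, and as you state it the estimate is both unjustified and insufficient.

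First, ``ordinary Young plus H{\"o}lder on the overlap'' does not produce a loss $2^{dk_{\mathrm{med}}\masfR (p)}$. Young's inequality requires $1/q_1+1/q_2=1+1/r$ for the target exponent $r$; when $\masfR (p)>0$ the natural Young target is strictly larger than $q_0'$, and passing to $q_0'$ by H{\"o}lder on the support of $f_{1,k_1}*f_{2,k_2}$ (measure $\sim 2^{d\max (k_1,k_2)}$) costs $2^{dk_{\max}\masfR (p)}$, not $2^{dk_{\mathrm{med}}\masfR (p)}$. Moreover, on any nonzero block the triangle inequality forces the two largest of $k_0,k_1,k_2$ to agree up to $O(1)$, so a ``median'' loss is the same as a ``max'' loss; with that loss the triple sum does not close under the non-strict hypotheses \eqref{lastineq2A}--\eqref{lastineq2B}: in the regime $k_1\ll k_0\sim k_2=k$ the block contributes about $2^{-k(t_0+t_2-d\masfR (p))}2^{-k_1t_1}c_{1,k_1}c_{2,k}$, and if $t_0+t_2=0$ while $t_1$ is large (allowed by \eqref{lastineq2A} and \eqref{lastineq2B}) the sum over $k_1$ yields no compensating factor $2^{-kt_1}$, so you would need $t_0+t_2\ge d\cdot \masfR (p)$, which is not assumed. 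In other words, this route only recovers Proposition \ref{weakyoungextension}, not Theorem \ref{proptogether}.

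Second, the estimate you actually need --- the loss placed on the \emph{smallest} of the three dyadic balls --- is not free and is false for some exponents admitted by the hypothesis $0\le \masfR (p)\le 1/2$: for $(p_0,p_1,p_2)=(4,1,4)$, so $\masfR (p)=1/2$, take $f_1=\chi _{B(0,2^{k_1})}$ and $f_2=\chi _{\{\eabs y \sim 2^{k}\}}$ with $k_1\ll k$; then $\nm {\chi _{\{\eabs x\sim 2^{k}\}}(f_1*f_2)}{L^{4/3}}\gtrsim 2^{d(k-k_1)/2}\cdot 2^{dk_1/2}\nm {f_1}{L^1}\nm {f_2}{L^4}$, so no bound with loss $2^{dk_{\min}\masfR (p)}$ can hold there. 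Which ball may carry the loss is governed by exponent conditions, and this is exactly what the paper's machinery provides: Proposition \ref{prop1} (whose nontrivial part is the endpoint $\masfR (p)=1/2$, proved by Cauchy--Schwarz together with the weighted arithmetic--geometric inequality, and then multilinear interpolation between $r=2$ and $r=\infty$), the region-by-region weight estimates of Lemmas \ref{intestimates}--\ref{estimatesTFj}, and finally a duality/permutation step in which Lemma \ref{1/2} guarantees that some permutation of the roles of $p_0,p_1,p_2$ satisfies \eqref{ineqR1}. Your outline invokes Lemma \ref{1/2} only as a vague licence to ``distribute the exponent against the right index'' and otherwise relies on H{\"o}lder, Young and Schur for sequences; those tools cannot produce the required block estimates, so the central step of the proof is missing rather than merely left as bookkeeping.
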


\par

The following corollary follows immediately from \eqref{FLqFLqlocemb} and
Theorem \ref{proptogether}.

\par

\begin{cor} \label{coro-local}
Let the hypothesis in Theorem \ref{proptogether} hold true, and
let $X\subseteq \rr r$ be open. Then the map $(f_1,f_2)\mapsto f_1
\cdot f_2$
on $C_0^\infty (X)$ extends uniquely to a continuous map from
$ (\mathscr F L^{q_1} _{s_1})_{loc}(X) \times  (\mathscr F L^{q_2}
_{s_2})_{loc}(X)$ to $(\mathscr F L^{q_0'} _{-s_0})_{loc}(X)$
\end{cor}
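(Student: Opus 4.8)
The plan is to deduce Corollary \ref{coro-local} directly from Theorem \ref{proptogether}(2) together with the elementary mapping property \eqref{FLqFLqlocemb}, so that essentially the whole content is bookkeeping about localization. First I would fix an arbitrary $\fy\in C_0^\infty(X)$ and choose a cut-off $\psi\in C_0^\infty(X)$ with $\psi\equiv 1$ on a neighbourhood of $\supp\fy$; then for $f_1,f_2$ the product $\fy\cdot(f_1f_2)$ equals $\fy\cdot((\psi f_1)(\psi f_2))$, which reduces the estimate of a seminorm of $f_1f_2$ to the global estimate applied to $\psi f_1$ and $\psi f_2$. Since $\fy f_j\in\mathscr FL^{q_j}_{s_j}(\rr d)$ for every test function by definition of the local space, and since multiplication by the fixed Schwartz function $\psi$ is continuous on $\mathscr FL^{q_j}_{s_j}(\rr d)$ (convolution on the Fourier side with $\widehat\psi\in\mathscr S$ against the weight $\eabs\cdot^{s_j}$, using $\eabs{\xi}^{s_j}\lesssim\eabs{\xi-\eta}^{|s_j|}\eabs{\eta}^{s_j}$ and Young's inequality), the pair $(\psi f_1,\psi f_2)$ lies in $\mathscr FL^{q_1}_{s_1}\times\mathscr FL^{q_2}_{s_2}$.

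Next I would invoke Theorem \ref{proptogether}(2): under the stated hypotheses the bilinear map $(g_1,g_2)\mapsto g_1g_2$, defined on $C_0^\infty$, extends continuously to $\mathscr FL^{q_1}_{s_1}\times\mathscr FL^{q_2}_{s_2}\to\mathscr FL^{q_0'}_{-s_0}$. Applying this to $g_j=\psi f_j$ gives $(\psi f_1)(\psi f_2)\in\mathscr FL^{q_0'}_{-s_0}(\rr d)$ with a norm bound in terms of $\nm{\psi f_1}{\mathscr FL^{q_1}_{s_1}}$ and $\nm{\psi f_2}{\mathscr FL^{q_2}_{s_2}}$, hence in terms of the defining seminorms of $f_1,f_2$ in the local spaces. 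Then $\fy\cdot(f_1f_2)=\fy\cdot((\psi f_1)(\psi f_2))\in\mathscr FL^{q_0'}_{-s_0}(\rr d)$ by another application of continuity of multiplication by a Schwartz function on $\mathscr FL^{q_0'}_{-s_0}$, and the resulting seminorm estimate $\nm{\fy(f_1f_2)}{\mathscr FL^{q_0'}_{-s_0}}\lesssim\nm{\psi f_1}{\mathscr FL^{q_1}_{s_1}}\nm{\psi f_2}{\mathscr FL^{q_2}_{s_2}}$ is exactly continuity of the bilinear map into $(\mathscr FL^{q_0'}_{-s_0})_{loc}(X)$ for the seminorm attached to $\fy$. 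Since $\fy$ was arbitrary, this establishes continuity for the full seminorm family defining the topology of $(\mathscr FL^{q_0'}_{-s_0})_{loc}(X)$.

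For uniqueness of the extension I would note that $C_0^\infty(X)$ is dense in each $(\mathscr FL^{q_j}_{s_j})_{loc}(X)$ for $q_j<\infty$ (and one argues by the usual weak-$*$ or duality device when some $q_j=\infty$, or simply records that the extension is the unique one agreeing with pointwise multiplication on the dense subspace), so that any continuous bilinear extension must coincide with the one just constructed; alternatively, one observes that $f_1f_2$ as a distribution in $\mathscr D'(X)$ is already well defined whenever the product makes sense, and the corollary merely asserts that this a priori product lands in the claimed local space with continuous dependence. The only mildly technical point is the continuity of multiplication by a fixed $\psi\in C_0^\infty$ on the weighted Fourier–Lebesgue spaces $\mathscr FL^q_s(\rr d)$; this is standard and follows from the submultiplicativity $\eabs{\xi}^s\lesssim\eabs{\xi-\eta}^{|s|}\eabs{\eta}^s$ together with $\widehat\psi\in\mathscr S(\rr d)\subseteq L^1_{|s|}(\rr d)$ and Young's inequality, and I would state it as a one-line remark rather than prove it in detail. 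I do not anticipate a genuine obstacle here: the corollary is, as the text says, immediate from \eqref{FLqFLqlocemb} and Theorem \ref{proptogether}, and the entire proof is the localization argument above.
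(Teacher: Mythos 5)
Your proof is correct and follows essentially the same localization argument as the paper: cut off with test functions in $C_0^\infty(X)$, apply Theorem \ref{proptogether}(2) to the globalized factors, and conclude via \eqref{FLqFLqlocemb}. The only difference is cosmetic: the paper chooses the cutoffs asymmetrically, taking $\phi_1=\fy$ itself and a larger $\phi_2\in C_0^\infty(X)$ with $\phi_2=1$ on $\supp \fy$, so that $\fy\, f_1f_2=(\fy f_1)(\phi_2 f_2)$ is handled by the global theorem in one stroke, which avoids your extra (standard, but unnecessary) step of multiplying the global product by $\fy$ and invoking continuity of multiplication by a fixed test function on $\mathscr FL^{q_0'}_{-s_0}$.
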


\par

The next result concerns corresponding properties for modulation spaces.

\par

\begin{thm}\label{proptogetherMod}
Let the hypothesis in Theorem \ref{proptogether} hold true and
Then the following is true:
\begin{enumerate}
\item Assume that $0\le \masfR (p) \le 1/2$, $\masfR (q)\le 1$ and
\eqref{lastineq2A}--\eqref{lastineq2C}
hold true, with strict inequality in \eqref{lastineq2B}
when $\masfR (p)>0$ and $t_j=d\cdot  \masfR (p) $
for some $j=0,1,2$. Then the map $(f_1,f_2)\mapsto f_1*f_2$
on $C_0^\infty (\rr d)$ extends to a continuous map from
$M^{p_1,q_1} _{s_1,t_1}(\rr d) \times  M^{p_2,q_2} _{s_2,t_2}(\rr d)$ to
$M^{p_0',q_0'} _{-s_0,-t_0}(\rr d)$;
%
%

\vrum

\item Assume that $\masfR (p)\le 1$ and $0\le \masfR (q) \le 1/2$, and
\eqref{lastineq2A}$'$--\eqref{lastineq2C}$'$
hold true, with strict inequality in \eqref{lastineq2B}$'$
when $\masfR (q)>0$ and $s_j=d\cdot  \masfR (q) $
for some $j=0,1,2$. Then the map $(f_1,f_2)\mapsto f_1\cdot f_2$ on
$C_0^\infty (\rr d)$ extends to a continuous map from $M^{p_1,q_1}
_{s_1,t_1}(\rr d) \times M^{p_2,q_2} _{s_2,t_2}(\rr d)$ to $M^{p_0',q_0'}
_{-s_0,-t_0}(\rr d)$.

\par

\end{enumerate}

\par

The same is true after $M^{p_j,q_j}_{s_j,t_j}$
have been replaced by $W^{p_j,q_j}_{s_j,t_j}$, $j=0,1,2$.

\par

Furthermore, the extensions of these mappings are unique, except when
$p_j$ or $q_j$ are equal to $\infty$ for more than one choice of $j=0,1,2$.
%
%
\end{thm}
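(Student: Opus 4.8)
The plan is to reduce the estimates of Theorem \ref{proptogetherMod} (in both the $M$- and $W$-versions) to the weighted convolution inequality of Theorem \ref{proptogether}(1), $L^{p_1}_{t_1}*L^{p_2}_{t_2}\subseteq L^{p_0'}_{-t_0}$, valid when $0\le\masfR(p)\le 1/2$ together with \eqref{lastineq2A} and \eqref{lastineq2B} (strict in the latter precisely when $\masfR(p)>0$ and $t_j=d\cdot\masfR(p)$ for some $j$), and to its Fourier counterpart Theorem \ref{proptogether}(2), which is the same inequality with $(q_j,s_j,\masfR(q))$ in place of $(p_j,t_j,\masfR(p))$. The link is the short-time Fourier transform. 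Fix $\phi_1,\phi_2\in\mathscr S(\rr d)\setminus 0$, say Gaussians. A straightforward computation with \eqref{stftformula}, using the elementary behaviour of the STFT under convolutions and products (the Fourier transform sending products to convolutions, the extra factors being unimodular), shows that with $\phi=\phi_1*\phi_2$
$$
|V_\phi(f_1*f_2)(x,\xi)|\le\big(|V_{\phi_1}f_1(\cdot,\xi)|*|V_{\phi_2}f_2(\cdot,\xi)|\big)(x),
$$
the convolution being in the $x$-variable only, while with $\phi=\phi_1\phi_2$
$$
|V_\phi(f_1\cdot f_2)(x,\xi)|\lesssim\big(|V_{\phi_1}f_1(x,\cdot)|*|V_{\phi_2}f_2(x,\cdot)|\big)(\xi),
$$
the convolution being in the $\xi$-variable only. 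Since modulation and $W$-norms are, up to equivalence, independent of the window, these windows may be used on the products and $\phi_j$ on $f_j$. It suffices to prove the bilinear estimate for $f_1,f_2\in C_0^\infty(\rr d)$, where the identities above are immediate, and then extend by continuity.

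Consider Theorem \ref{proptogetherMod}(1), $M$-case. First reduce to $\masfR(q)=1$, i.{\,}e. $1/q_1+1/q_2=1/q_0'$: the assumption $\masfR(q)\le 1$ is exactly $1/q_1+1/q_2\ge 1/q_0'$, so there are $\widetilde q_j\in[q_j,\infty]$ with $1/\widetilde q_1+1/\widetilde q_2=1/q_0'$, and $M^{p_j,q_j}_{s_j,t_j}\hookrightarrow M^{p_j,\widetilde q_j}_{s_j,t_j}$; this embedding follows from the reproducing-kernel bound $|V_\phi g|\lesssim|V_\phi g|*|V_\phi\phi|$ and a mixed-norm Young inequality, since $|V_\phi\phi|$ is Schwartz. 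None of \eqref{lastineq2A}--\eqref{lastineq2C} nor the strictness condition involves the $q_j$, so these persist; relabelling, assume $1/q_1+1/q_2=1/q_0'$. For each fixed $\xi$, Theorem \ref{proptogether}(1) in the $x$-variable — whose hypotheses $0\le\masfR(p)\le 1/2$, \eqref{lastineq2A}, \eqref{lastineq2B} with the stated strictness are exactly what is assumed — gives
$$
\big\|\eabs{\cdot}^{-t_0}\,(|V_{\phi_1}f_1(\cdot,\xi)|*|V_{\phi_2}f_2(\cdot,\xi)|)\big\|_{L^{p_0'}}\lesssim A_1(\xi)\,A_2(\xi),\qquad A_j(\xi)=\big\|\eabs{\cdot}^{t_j}V_{\phi_j}f_j(\cdot,\xi)\big\|_{L^{p_j}}.
$$
Then \eqref{lastineq2C} yields $\eabs{\xi}^{-s_0}\le\eabs{\xi}^{s_1}\eabs{\xi}^{s_2}$, and Hölder's inequality in $\xi$ with $1/q_1+1/q_2=1/q_0'$ gives
$$
\nm{f_1*f_2}{M^{p_0',q_0'}_{-s_0,-t_0}}\lesssim\big\|\eabs{\cdot}^{s_1}A_1\big\|_{L^{q_1}}\,\big\|\eabs{\cdot}^{s_2}A_2\big\|_{L^{q_2}}=\nm{f_1}{M^{p_1,q_1}_{s_1,t_1}}\,\nm{f_2}{M^{p_2,q_2}_{s_2,t_2}}.
$$

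The remaining three cases are the same argument with the roles of $x$ and $\xi$, of the two STFT identities, and of the two orders of integration permuted. For the $W$-version of (1) one takes the $L^{q_0'}_\xi$-norm first: for fixed $x$, push it inside the $x$-convolution integral by Minkowski's inequality, absorb $\eabs{\xi}^{-s_0}$, apply Hölder in $\xi$ (after the same reduction to $1/q_1+1/q_2=1/q_0'$), and arrive at $\big\|\eabs{\cdot}^{-t_0}(B_1*B_2)\big\|_{L^{p_0'}}$ with $B_j(x)=\big\|\eabs{\cdot}^{s_j}V_{\phi_j}f_j(x,\cdot)\big\|_{L^{q_j}}$, where Theorem \ref{proptogether}(1) applies in $x$. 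For (2) one uses the multiplication identity: reduce to $1/p_1+1/p_2=1/p_0'$ (possible since $\masfR(p)\le 1$) via $M^{p_j,q_j}_{s_j,t_j}\hookrightarrow M^{\widetilde p_j,q_j}_{s_j,t_j}$, apply Theorem \ref{proptogether}(2) (the convolution estimate with $(q,s)$ for $(p,t)$; hypotheses $0\le\masfR(q)\le 1/2$, \eqref{lastineq2A}$'$, \eqref{lastineq2B}$'$ with the stated strictness) in the $\xi$-variable, then absorb $\eabs{x}^{-t_0}\le\eabs{x}^{t_1}\eabs{x}^{t_2}$ (condition \eqref{lastineq2C}$'$) and use Hölder in $x$; the $W$-version again only changes the order of the two norms. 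Alternatively (2) follows from (1) by conjugating with the Fourier transform, which interchanges the $M$- and $W$-spaces (with indices and weight parameters suitably permuted) and sends $f_1\cdot f_2$ to a constant multiple of $\widehat f_1*\widehat f_2$. Uniqueness of the extension follows, as usual, from the density of $C_0^\infty(\rr d)$ in $M^{p,q}_{s,t}$ and $W^{p,q}_{s,t}$ when $p,q<\infty$, supplemented by a weak-$*$/distributional limiting argument when a single one of the indices involved is $\infty$; the remaining configurations are precisely the stated exceptions.

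The conceptual point — essentially the only choice to be made — is that on the STFT side the convolution $f_1*f_2$ separates into a convolution in one variable and a pointwise (Hölder) product in the other, so the improved Lebesgue inequality of Theorem \ref{proptogether} can be inserted exactly along the convolution variable; this is what allows $\masfR$ to run through all of $[0,1/2]$ there. The points requiring care are: (i) checking that after the reductions the hypotheses of Theorem \ref{proptogether} — the three weight inequalities and the strictness condition — hold verbatim; and (ii) the embedding step reducing to $\masfR(q)=1$ (resp. $\masfR(p)=1$), which cannot be omitted, since otherwise the pointwise Hölder estimate in the non-extremal variable would require a global inclusion between distinct $L^r(\rr d)$-spaces, which does not hold. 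Establishing the two STFT identities for arbitrary tempered distributions would take some technical work, but as they are needed only on $C_0^\infty$ and the general case comes by density, this is not a genuine obstacle.
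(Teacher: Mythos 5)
Your proof is correct and follows essentially the same route as the paper: the STFT convolution/product identities combined with Theorem \ref{proptogether} applied in the convolution variable and H{\"o}lder's inequality (with the weight condition \eqref{lastineq2C}, resp. \eqref{lastineq2C}$'$) in the complementary variable, followed by density/duality for the extension. The only organizational differences are that you prove (1) directly from the convolution identity whereas the paper proves (2) and deduces (1) by Fourier conjugation using $\mathscr F M^{p,q}_{s,t}=W^{q,p}_{t,s}$, and that you make explicit the embedding step reducing to $\masfR (q)=1$ (resp. $\masfR (p)=1$), a point the paper leaves implicit in its appeal to H{\"o}lder's inequality.
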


\par

\begin{rem}
By letting $x_j=1/p_j$ in Lemma \ref{1/2}, we may replace the condition
$0\le \masfR (p)\le 1/2$ in Theorems \ref{proptogether}--\ref{proptogetherMod}
with
$$
0 \le \masfR (p) \le \max \left ( \frac{1}{2}, \min \left (
\frac {1}{p_0}, \frac {1}{p_1},\frac {1}{p_2} \right ) \right ).
$$
\end{rem}

\par

The following result shows that the conditions \eqref{lastineq2A} and
\eqref{lastineq2B} are also necessary in order for the continuity in Theorem
\ref{proptogether} should hold true.

\par

\begin{prop}\label{otpmimality1}
Let $p_j,q_j\in [1,\infty ]$ and $s_j,t_j\in \mathbf R$, $j=0,1,2$. Assume that at
least one of the following statements hold true:
\begin{enumerate}
\item the map $(f_1,f_2)\mapsto f_1*f_2$ on $\mathscr S(\rr d)$ is continuously
extendable to a map from $L^{p_1}_{t_1}(\rr d)\times L^{p_2}_{t_2}(\rr d)$ to
$L^{p_0'}_{-t_0}(\rr d)$;

\vrum

\item the map $(f_1,f_2)\mapsto f_1*f_2$ on $\mathscr S(\rr d)$ is continuously
extendable to a map from $M^{p_1,q_1}_{s_1,t_1}(\rr d)\times
M^{p_2,q_2}_{s_2,t_2}(\rr d)$ to $M^{p_0',q_0'} _{-s_0,-t_0}(\rr d)$;

\vrum

\item the map $(f_1,f_2)\mapsto f_1*f_2$ from $\mathscr S(\rr d) \times \mathscr
S(\rr d)$ to $\mathscr S(\rr d)$ is continuously extendable to a map from
$W^{p_1,q_1}_{s_1,t_1}(\rr d)\times W^{p_2,q_2}_{s_2,t_2}(\rr d)$ to $W^{p_0',q_0'}
_{-s_0,-t_0}(\rr d)$.
\end{enumerate}
Then \eqref{lastineq2A} and \eqref{lastineq2B} hold true.
\end{prop}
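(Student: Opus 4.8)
The plan is to prove the two stated inequalities by testing the assumed continuous bilinear extension $(f_1,f_2)\mapsto f_1*f_2$ on explicit one-parameter families of Schwartz functions and reading off the required relations between the weights from the resulting scaling behaviour. The observation that makes the three cases (1)--(3) uniform is that for the families used below the frequency content is confined to a fixed compact set, so the weights $\eabs \xi ^{s_j}$ contribute only bounded factors and the $M^{p_j,q_j}_{s_j,t_j}$- and $W^{p_j,q_j}_{s_j,t_j}$-norms of the test functions are comparable, uniformly in the family parameter, to the corresponding $L^{p_j}_{t_j}$-norms. This is also why only \eqref{lastineq2A} and \eqref{lastineq2B}, and not the $\eabs \xi$-condition \eqref{lastineq2C}, can be recovered in this way.

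To obtain \eqref{lastineq2A}, I would fix $0\le \fy \in C_0^\infty (\rr d)\setminus 0$, a unit vector $e$ and a large parameter $N$. For $t_1+t_2\ge 0$ take $f_1=\fy (\cdo -Ne)$ and $f_2=\fy (\cdo +Ne)$; then $f_1*f_2=\fy *\fy$ is independent of $N$, so its norm in the target space is a fixed positive constant, whereas $\norm{f_1}\sim \eabs N^{t_1}$ and $\norm{f_2}\sim \eabs N^{t_2}$ in the respective source norms, with constants independent of $N$. Continuity forces $1\lesssim \eabs N^{t_1+t_2}$ for all large $N$, hence $t_1+t_2\ge 0$. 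For $t_0+t_1\ge 0$ (and, after interchanging the two factors, $t_0+t_2\ge 0$) take instead $f_1=\fy (\cdo -Ne)$ and $f_2=\fy$; now $f_1*f_2=(\fy *\fy )(\cdo -Ne)$ has target norm $\sim \eabs N^{-t_0}$ while the source norms are $\sim \eabs N^{t_1}$ and a fixed constant, so continuity gives $\eabs N^{-t_0}\lesssim \eabs N^{t_1}$, i.e. $t_0+t_1\ge 0$.

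To obtain \eqref{lastineq2B}, fix $0\le \psi \in C_0^\infty (\rr d)\setminus 0$ with $\supp \psi \subseteq \{ 1/2<|x|<1\}$ and set $f_j^R(x)=\psi (x/R)$, $j=1,2$, for $R$ large. Since $\eabs x\sim R$ on $\supp f_j^R$ and $f_j^R$ varies only on scale $R$, its short-time Fourier transform with a fixed window equals, up to bounded relative error, $f_j^R$ tensored with a fixed profile in $\xi$; hence $\norm{f_j^R}\sim R^{t_j+d/p_j}$ (with $d/\infty =0$), uniformly in $R$, in each source norm. On the other hand $f_1^R*f_2^R(x)=R^d(\psi *\psi )(x/R)$, and $\psi *\psi$ is nonnegative and strictly positive at some point with $|x|\sim 1$, hence bounded below on a set of measure $\sim R^d$ on which $\eabs x\sim R$; therefore the target norm is $\gtrsim R^{d-t_0+d/p_0'}$. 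Continuity then yields
$$
R^{d-t_0+d/p_0'}\ \lesssim\ R^{t_1+d/p_1}\cdot R^{t_2+d/p_2}\qquad (R\to \infty ),
$$
and since $d+d/p_0'-d/p_1-d/p_2=d\cdot \masfR (p)$ this is precisely \eqref{lastineq2B}.

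The hard part will be the step asserting that these norms are two-sidedly comparable to the displayed powers of $N$ and $R$ with constants independent of the family parameter, for all $p_j,q_j\in [1,\infty ]$ and $s_j\in \mathbf R$, in the $M^{p,q}$- and $W^{p,q}$-settings: one must control the short-time Fourier transform of a spatially localized, slowly varying bump from above and from below, taking care of the window tails and of the case $t_j<0$ (where the spatial weight is largest exactly where the transform is small). Once this uniform control is in place, the scaling computations above apply verbatim to each of (1), (2) and (3), and the proposition follows.
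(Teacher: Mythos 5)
Your case (1) is handled correctly, and in fact by a somewhat different route than the paper for \eqref{lastineq2B}: the paper tests on the family $f_j=\eabs \cdo ^{-t_j}e^{-\alpha |\cdo |^2}$ and lets $\alpha \to 0$ (cf. \eqref{fjnorms}--\eqref{hnormests2}), whereas your dilated annular bumps $\psi (\cdo /R)$ give the same conclusion by a cleaner scaling computation; the exponent identity $d+d/p_0'-d/p_1-d/p_2=d\cdot \masfR (p)$ is right, and your direct translate-versus-fixed-bump argument for the pairs involving $t_0$ in \eqref{lastineq2A} is a fine substitute for the paper's appeal to duality.

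The genuine gap is in cases (2) and (3). Everything there hinges on the claim that the $M^{p_j,q_j}_{s_j,t_j}$- and $W^{p_j,q_j}_{s_j,t_j}$-norms of your test families are comparable, two-sidedly and uniformly in $N$ resp.\ $R$, to the corresponding $L^{p_j}_{t_j}$-behaviour, and you explicitly defer this as ``the hard part''; moreover the justification you sketch is not correct as stated. A $C_0^\infty$ bump never has frequency content in a fixed compact set, and the assertion that $V_\phi f_j^R$ equals $f_j^R$ tensored with a fixed $\xi$-profile ``up to bounded relative error'' fails pointwise (e.g. near the boundary of $\supp f_j^R$, where the STFT does not vanish); what is true is a norm comparability whose upper-bound half requires genuine (if standard) STFT estimates, and whose lower-bound half --- needed for $\nm {f_1^R*f_2^R}{M^{p_0',q_0'}_{-s_0,-t_0}}$ with $s_0,t_0$ of arbitrary sign and arbitrary $q_0$ --- needs a separate argument, for instance dualizing against a second dilated bump. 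As written, (2) and (3) are therefore not proved. The paper avoids exactly this difficulty: it convolves the test functions with a fixed $\fy \in \mathscr FC_0^\infty (K)$, so that all functions involved have Fourier support in a fixed compact set, and then uses the equivalences \eqref{Fsuppidentities} (from \cite{RSTT}) to replace the modulation and amalgam norms by weighted Lebesgue norms, reducing (2) and (3) to a mild perturbation of the case-(1) computation (see \eqref{continuityrel2}). Either supply the uniform STFT estimates you postponed, or adopt this band-limiting reduction, under which your scaling family goes through essentially verbatim.
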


\par

By Fourier transformation, it follows that Proposition \ref{otpmimality1}
is equivalent to the following result.

\par

\begin{prop}\label{otpmimality2}
Let $p_j,q_j\in [1,\infty ]$ and $s_j,t_j\in \mathbf R$, $j=0,1,2$. Assume that at
least one of the following statements hold true:
\begin{enumerate}
\item the map $(f_1,f_2)\mapsto f_1\cdot f_2$ on $\mathscr S(\rr d)$ is continuously
extendable to a map from $\mathscr FL^{q_1}_{s_1}(\rr d)\times
\mathscr FL^{q_2}_{s_2}(\rr d)$ to $\mathscr FL^{q_0'}_{-s_0}(\rr d)$;

\vrum

\item the map $(f_1,f_2)\mapsto f_1\cdot f_2$ on $\mathscr S(\rr d)$ is continuously
extendable to a map from $M^{p_1,q_1}_{s_1,t_1}(\rr d)\times
M^{p_2,q_2}_{s_2,t_2}(\rr d)$ to $M^{p_0',q_0'} _{-s_0,-t_0}(\rr d)$;

\vrum

\item the map $(f_1,f_2)\mapsto f_1\cdot f_2$ from $\mathscr S(\rr d) \times \mathscr
S(\rr d)$ to $\mathscr S(\rr d)$ is continuously extendable to a map from
$W^{p_1,q_1}_{s_1,t_1}(\rr d)\times W^{p_2,q_2}_{s_2,t_2}(\rr d)$ to $W^{p_0',q_0'}
_{-s_0,-t_0}(\rr d)$.
\end{enumerate}
Then \eqref{lastineq2A}$'$ and \eqref{lastineq2B}$'$ hold true.
\end{prop}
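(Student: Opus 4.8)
The plan is to deduce Proposition \ref{otpmimality2} from Proposition \ref{otpmimality1} by conjugating the maps in question with the Fourier transform, as announced just before the statement. First I would record the relevant intertwining and mapping properties of $\mathscr F$: that $\mathscr F(f_1\cdot f_2)=(2\pi)^{-d/2}\widehat f_1*\widehat f_2$ and $\mathscr F(f_1*f_2)=(2\pi)^{d/2}\widehat f_1\cdot\widehat f_2$ for $f_1,f_2\in\mathscr S(\rr d)$; that $\mathscr F$ is an isometric isomorphism from $\mathscr FL^q_s(\rr d)$ onto $L^q_s(\rr d)$, which is immediate from the definition \eqref{FLnorm}; and that, by the standard covariance relation $|V_\phi\widehat f(x,\xi)|=|V_{\mathscr F^{-1}\phi}f(-\xi,x)|$ for the short-time Fourier transform, $\mathscr F$ restricts to topological isomorphisms $M^{p,q}_{s,t}(\rr d)\to W^{q,p}_{t,s}(\rr d)$ and $W^{p,q}_{s,t}(\rr d)\to M^{q,p}_{t,s}(\rr d)$. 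In other words, passing to the Fourier side exchanges the two families $M$ and $W$ and permutes the parameters according to $(p,q,s,t)\mapsto(q,p,t,s)$. Since $\mathscr F$ maps $\mathscr S(\rr d)$ bijectively onto itself, conjugating a given continuous extension of the product by $\mathscr F$ produces, up to the harmless factor $(2\pi)^{\pm d/2}$, a continuous extension of the convolution, and conversely; hence each of the three hypotheses of Proposition \ref{otpmimality2} is transported into one of the hypotheses of Proposition \ref{otpmimality1}, as spelled out case by case below.

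Concretely, in case (1) continuity of $(f_1,f_2)\mapsto f_1\cdot f_2$ from $\mathscr FL^{q_1}_{s_1}\times\mathscr FL^{q_2}_{s_2}$ into $\mathscr FL^{q_0'}_{-s_0}$ becomes, after conjugation by $\mathscr F$, continuity of $(g_1,g_2)\mapsto g_1*g_2$ from $L^{q_1}_{s_1}\times L^{q_2}_{s_2}$ into $L^{q_0'}_{-s_0}$; applying Proposition \ref{otpmimality1}(1) with $(p_j,t_j)$ replaced by $(q_j,s_j)$ yields exactly $0\le s_j+s_k$ and $0\le s_0+s_1+s_2-d\cdot\masfR(q)$, which are \eqref{lastineq2A}$'$ and \eqref{lastineq2B}$'$. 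In case (2), the map $(f_1,f_2)\mapsto f_1\cdot f_2$ on the spaces $M^{p_j,q_j}_{s_j,t_j}$ is turned by $\mathscr F$ into $(g_1,g_2)\mapsto g_1*g_2$ on the spaces $W^{q_j,p_j}_{t_j,s_j}$, to which Proposition \ref{otpmimality1}(3) applies with $(p_j,q_j,s_j,t_j)$ replaced by $(q_j,p_j,t_j,s_j)$; its conclusion \eqref{lastineq2A}--\eqref{lastineq2B} then reads $0\le s_j+s_k$ and $0\le s_0+s_1+s_2-d\cdot\masfR(q)$, that is, again \eqref{lastineq2A}$'$ and \eqref{lastineq2B}$'$. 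Case (3) is handled in the same way, now turning $W^{p_j,q_j}_{s_j,t_j}$ into $M^{q_j,p_j}_{t_j,s_j}$ and invoking Proposition \ref{otpmimality1}(2) with the same relabelling.

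I do not expect a genuine obstacle in this argument; the only point requiring care is to pin down the exact covariance identity for $V_\phi\widehat f$, and through it the precise permutation of $(p,q,s,t)$ together with the swap of the symbols $M$ and $W$, so that the conclusion furnished by Proposition \ref{otpmimality1} comes out on the weight--exponent pair $(s,q)$ rather than $(t,p)$. Once this bookkeeping is settled, the two propositions are seen to be equivalent, and the statement follows at once.
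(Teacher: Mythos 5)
Your proposal is correct and follows essentially the same route as the paper, which obtains Proposition \ref{otpmimality2} from Proposition \ref{otpmimality1} precisely by Fourier conjugation, using $\mathscr F(f_1\cdot f_2)=(2\pi)^{-d/2}\widehat f_1*\widehat f_2$ together with the identifications $\mathscr F(\mathscr FL^q_s)=L^q_s$ and $\mathscr F M^{p,q}_{s,t}=W^{q,p}_{t,s}$. The case-by-case bookkeeping of the parameter swap $(p,q,s,t)\mapsto(q,p,t,s)$ is exactly the content of the paper's remark that the two propositions are equivalent.
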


\par

\begin{rem}
In the literature, there are several results which are related to Theorems
\ref{proptogether} and \ref{proptogetherMod} (cf. e.{\,}g. the first part of
Proposition 2.3 in \cite{LiuCui} and the references therein). It seems that some of
these results contain some mistakes.

\par

More precisely, let $p_j,q_j\in [1,\infty]$, $j=0,1,2$, and $s\ge 0$ be such
that $\masfR (p)=1$ and $\masfR (q) =0$. Then it is remarked in \cite{Iw}
that the map $(f_1,f_2)\mapsto f_1\cdot f_2$ on $\mathscr S$ is
extendable to a continuous map from $M^{p_1,q_1}_{s,0}\times
M^{p_2,q_2}_{0,0}$ to $M^{p_0',q_0'}_{s,0}$. (Cf. Remark 2.4 in \cite{Iw}.)
We claim that this is not correct when $s>0$.

\par

In fact, by applying the Fourier transform and using duality, the statement is
equivalent to the following statement:

\medspace

\emph{Let $p_j,q_j\in [1,\infty ]$ and $t_j\in \mathbf R$, $j=0,1,2$, be such that
$\masfR (p)=0$, $\masfR (q) =1$, $t_1=-t_2\ge 0$ and $t_0=0$. Then the
map $(f_1,f_2)\mapsto f_1\cdot f_2$ on $\mathscr S$ is extendable to
a continuous map from $W^{p_1,q_1}_{0,t_1}\times W^{p_2,q_2}_{0,t_2}$ to
$W^{p_0',q_0'}_{0,t_0}$.}

\medspace

The hypothesis in Proposition \ref{otpmimality1} is therefore fulfilled, but
\eqref{lastineq2A} is violated. This contradicts Proposition \ref{otpmimality1},
and the claim follows.
\end{rem}

\par

\section{Proofs} \label{sec4}

\par

In this section we present proofs of the results in Section
\ref{sec2}. In Subsection \ref{subsec4.1} we study in details the
problem of extensions of an auxiliary three-linear map. In
Subsection \ref{subsec4.2} we use the results from Subsection
\ref{subsec4.1} to prove Lebesgue norm estimates of the three-linear
form on different regions. Finally, in Subsection \ref{subsec4.3} we
prove the main results.

\par

\subsection{The map $T_F(f,g)$}\label{subsec4.1}
In this subsection we introduce and study a
convenient bilinear map (denoted by
$T_F $ here below when $F\in L^1_{loc}$ is appropriate).
We refer to \cite{Hrm-nonlin} and \cite{PTT2} for similar construction.

\par

For $F \in L^1 _{loc} (\rr {2d}) $ and $ p,q \in [1,\infty] $, we
set
\begin{align*}
\nm F{L^{p,q}_1} &\equiv \Big ( \int \Big ( \int |F(x ,y )|^p\,
dx \Big )^{q/p}\, dy  \Big )^{1/q}
\intertext{and} \nm
F{L^{p,q}_2} &\equiv \Big ( \int \Big ( \int |F(x ,y )|^q\,
dy \Big )^{p/q}\, dx  \Big )^{1/p},
\end{align*}
and we let $L^{p,q}_1(\rr {2d})$ be the set of all $F \in L^1 _{loc} (\rr {2d})$ such that
$\nm F{L^{p,q}_1}$ is finite. The space $L^{p,q}_2$ is defined analogously.  (Cf.
\cite{PTT, PTT2}.) We also let $\Theta$ be defined as
\begin{equation}\label{FGrel}
(\Theta F)(x ,y ) = F(x ,x-y ),\qquad F\in L^1_{loc}(\rr {2d}).
\end{equation}

\par

If $F\in L^1_{loc}(\rr {2d})$ is fixed, then we are especially concerned about
extensions of the mappings
\begin{alignat}{2}
(F,f,g) &\mapsto & T_F(f,g) &\equiv \int F(\cdo  ,y )f(y )g(\cdo  -y )\, dy
\label{TFmap}
\intertext{and}
(F,f,g) &\mapsto & T_{\Theta F}(f,g) &\equiv \int F(\cdo  ,y )f(\cdo  - y )g(y )\, dy .
\label{TGmap}
\end{alignat}
from $ C^\infty _0 (\rr d) \times  C^\infty _0 (\rr d)$
to $  \mathscr S' (\rr d)$.

\par

The following extend \cite[Lemma 8.3.2]{Hrm-nonlin} and \cite[Proposition
3.2]{PTT2}.

\par

\begin{prop}\label{prop1}
Let $F\in L^1_{loc}(\rr {2d})$, $p_j\in [1,\infty
]$, $j=0,1,2$. Also assume that $\masfR (p)$ in \eqref{Rqfunctional} is non-negative,
and let $r=1/\masfR (p)\in (0,\infty ]$.
Then the following is true:
\begin{enumerate}
\item if $\masfR (p) \leq 1/p_0$, then the mappings \eqref{TFmap}
and \eqref{TGmap} are continuous from $L^{\infty ,r}_2(\rr
{2d})\times L^{p_1}(\rr d)\times L^{p_2}(\rr d)$ to $L^{p'_0}(\rr
d)$. Furthermore,
\begin{align}
\nm{T_F(f,g)}{L^{q'_0}} &\lesssim
\nm{F}{L^{\infty,r}_2}\nm{f}{L^{p_1}}\nm{g}{L^{p_2}}\label{TFL2est1}
\intertext{and} \nm{T_{\Theta F}(f,g)}{L^{p_0'}} &\lesssim
\nm{F}{L^{\infty,r}_2}\nm{f}{L^{p_1}}\nm{g}{L^{p_2}}.
\label{TFL2est2}
\end{align}

\vrum

\item if in addition $\masfR (p)\le \max (1/2,1/p_1)$,
then the map \eqref{TFmap} is continuous from $L^{r,\infty}_1(\rr
{2d})\times L^{p_1}(\rr d)\times L^{p_2}(\rr d)$ to $L^{p'_0}(\rr
d)$. Furthermore,
\[
\nm{T_F(f,g)}{L^{p'_0}} \lesssim \nm{F}{L^{r,\infty}_1}
\nm{f}{L^{p_1}}\nm{g}{L^{p_2}}.
\]

\vrum

\item if in addition $\masfR (p)\le \max (1/2,1/p_2)$,
then the map \eqref{TGmap} is continuous from $L^{r,\infty}_1(\rr
{2d})\times L^{p_1}(\rr d)\times L^{p_2}(\rr d)$ to $L^{p'_0}(\rr d)$.
Furthermore, 
\[
\nm{T_{\Theta F}(f,g)}{L^{p'_0}} \lesssim \nm{F}{L^{r,\infty}_1}
\nm{f}{L^{p_1}}\nm{g}{L^{p_2}}.
\]
\end{enumerate}
\end{prop}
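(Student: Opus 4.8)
\textbf{Proof proposal for Proposition \ref{prop1}.}

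The plan is to reduce everything to Young's inequality and the Hausdorff--Young / Hölder machinery by peeling off the $y$-integration carefully, treating $F(\cdot,y)$ as a parameter-dependent multiplier. First I would note that $T_{\Theta F}(f,g)=T_F(g,f)$ with the roles of $f$ and $g$ and of $p_1,p_2$ interchanged after the substitution $y\mapsto \cdot-y$, so parts (1)--(3) for $T_{\Theta F}$ follow from the corresponding statements for $T_F$; hence it suffices to estimate $T_F(f,g)(x)=\int F(x,y)f(y)g(x-y)\,dy$. For part (1), fix $x$ and apply Hölder's inequality in the $y$-variable with the exponent triple $(\infty,p_1,p_2)$ after splitting: writing $h_x(y)=f(y)g(x-y)$ we have $h_x\in L^{p_0'}$ in $y$ (as $\masfR(p)\le 1/p_0$ forces $1/p_1+1/p_2\le 1+1/p_0'$, so a Young-type product estimate applies), but the cleanest route is to observe $1/p_0 = 1/p_0'\cdot 0 + \text{(the Young deficit)}$, and instead interpolate: when $\masfR(p)=0$ this is exactly Young's inequality with $F\in L^\infty$, and when $\masfR(p)=1/p_0$ it reduces to a pointwise-in-$y$ Hölder estimate giving $L^1$ in $x$; the general case $0\le \masfR(p)\le 1/p_0$ follows by complex (or real) interpolation between these two endpoints, which produces precisely the mixed-norm space $L^{\infty,r}_2$ with $r=1/\masfR(p)$ as the interpolation exponent in the $y$-slot.

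For parts (2) and (3), the extra hypothesis $\masfR(p)\le\max(1/2,1/p_1)$ (resp. $1/p_2$) signals that one should instead put $F$ in $L^{r,\infty}_1$, i.e.\ $L^\infty$ in $y$ of $L^r$ in $x$. Here I would fix $y$ first: for each fixed $y$ the inner expression is $F(x,y)\,(\tau_y f)(x)\,$ convolved structure $g(x-y)$... more precisely $x\mapsto F(x,y)f(y)g(x-y)$, and after taking the $L^{p_0'}$-norm in $x$ and then the appropriate norm in $y$, Minkowski's integral inequality lets us move the $x$-norm inside. The key inequality is then a three-factor Hölder in $x$ with exponents $(r,\text{something},\text{something})$ applied to $F(x,y)$, $f$ (which is $x$-independent, so it only contributes $|f(y)|$), and $g(x-y)$, followed by taking $L^1$ (the $\infty$-dual... rather $L^1$) in $y$ against $|f(y)|$; this is where the condition $1/r=\masfR(p)$ and the constraint with $\max(1/2,1/p_1)$ enter, guaranteeing the Hölder exponents are admissible and $\ge 1$. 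The asymmetry between $p_1$ and $p_2$ in (2) versus (3) comes from whether $f$ or $g$ carries the translated argument, which is exactly why $\Theta$ is introduced.

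The main obstacle will be bookkeeping the admissibility of the Hölder/Young exponents in parts (2)--(3): one must check that the constraint $\masfR(p)\le\max(1/2,1/p_1)$ is precisely what makes the relevant exponent lie in $[1,\infty]$, and handle separately the two regimes $\masfR(p)\le 1/2$ and $1/2<\masfR(p)\le 1/p_1$ (the latter forcing $p_1$ small). A secondary technical point is justifying the interpolation in part (1) at the level of the trilinear map --- one fixes $f,g\in C_0^\infty$, views $F\mapsto T_F(f,g)$ as linear, establishes the two endpoint bounds $L^{\infty,\infty}_2\to L^{p_0'}$ (Young, $\masfR=0$) and $L^{\infty,1}_2\to L^{p_0'}$ (the $\masfR=1/p_0$ case, by Minkowski plus Young with $L^1$ in one factor), and interpolates in $F$ to land in $L^{\infty,r}_2$; density of $C_0^\infty$ then yields the stated continuous extension. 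I expect the endpoint $\masfR(p)=1/p_0$ computation and the exponent arithmetic in (2)--(3) to absorb most of the work, while the structural reductions via $\Theta$ and interpolation are routine.
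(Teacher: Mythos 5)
Your reduction of (3) to (2) and of \eqref{TFL2est2} to \eqref{TFL2est1} via the identity $T_{\Theta F}(f,g)=T_F(g,f)$ is fine, but both of your main arguments have genuine gaps. In (1), the route you settle on --- fixing $f\in L^{p_1}$, $g\in L^{p_2}$ and interpolating the linear map $F\mapsto T_F(f,g)$ between $L^{\infty,\infty}_2\to L^{p_0'}$ and $L^{\infty,1}_2\to L^{p_0'}$ --- fails at both endpoints. With the target and the exponents $p_1,p_2$ held fixed, the bound $\nm{T_F(f,g)}{L^{p_0'}}\lesssim\nm F{L^{\infty,1}_2}\nm f{L^{p_1}}\nm g{L^{p_2}}$ is false whenever $\masfR (p)<1$ (take $F(x,y)=\varepsilon ^{-d}\chi _{|y-x/2|\le \varepsilon}$ and $f=g=\chi _{B_R}$; the ratio of the two sides behaves like $R^{d(\masfR (p)-1)}\to \infty$ as $R\to 0$), and the $L^{\infty,\infty}_2$ endpoint only yields a constant depending on $f,g$ rather than $\nm f{L^{p_1}}\nm g{L^{p_2}}$, since Young places $|f|*|g|$ in $L^c$ with $1/c=1/p_1+1/p_2-1\neq 1/p_0'$. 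A correct interpolation proof would have to be genuinely multilinear, letting the exponents of $f,g$ and the target vary along the scale. The direct argument you mention first (and then discard) is the right one, but with the pair $(r,r')$ rather than $(\infty,p_1,p_2)$: H{\"o}lder in $y$ gives $|T_F(f,g)(x)|\le \nm{F(x,\cdot)}{L^r}\bigl((|f|^{r'}*|\check g|^{r'})(x)\bigr)^{1/r'}$, and Young applied to $|f|^{r'}*|g|^{r'}$ in $L^{p_0'/r'}$ finishes; the hypothesis $\masfR (p)\le 1/p_0$ is exactly $p_0'\ge r'$. This is the paper's proof of (1).

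The more serious gap is in (2)--(3). Your Minkowski-then-H{\"o}lder scheme forces $f$ to enter only through $|f(y)|$ integrated in $y$, so it produces $\nm F{L^{r,\infty}_1}\nm g{L^s}\nm f{L^1}$ with $1/s=1/p_0'-1/r$, which coincides with the claimed estimate only when $p_1=1$. In particular it cannot reach the central case $p_0=p_1=p_2=2$, $\masfR (p)=1/2$ (H{\"o}rmander's Lemma 8.3.2), and no exponent bookkeeping repairs this. The hypothesis $\masfR (p)\le \max (1/2,1/p_1)$ covers two regimes which the paper treats by two different, nontrivial devices, neither of which appears in your proposal: for $\masfR (p)\le 1/p_1$, a duality step $\langle T_F(f,g),h\rangle =\langle T_{F_0}(h,\check g),f\rangle$ with $F_0(y,x)=F(x,y)$, which converts $L^{r,\infty}_1$ control of $F$ into $L^{\infty ,r}_2$ control of $F_0$ and reduces the claim to part (1); and for $\masfR (p)\le 1/2$, endpoint estimates at $r=\infty$ (plain Young) and at $r=2$ --- the latter proved by testing against $h\in L^{p_0}$, Cauchy--Schwarz in $x$, the weighted arithmetic--geometric mean inequality and Young's inequality for $|g|^2*|h|^2$ --- glued together by multilinear interpolation (Bergh--L{\"o}fstr{\"o}m) to cover $r\in (2,\infty )$. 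Without these two ingredients the main cases of the proposition remain unproved.
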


\par

We note that Proposition \ref{prop1} agrees with \cite[Lemma
8.3.2]{Hrm-nonlin} when $ p_1 = p_2 = 2 $ and with \cite[Proposition
3.2]{PTT2} when $ p_1 = p_2 \in [1,\infty] $.

\par

\begin{proof}
(1) We only prove \eqref{TFL2est1} and leave $\eqref{TFL2est2}$ for the reader.

\par

First, assume that $p_1,p_2<\infty$, and let $f,g\in C_0^\infty (\rr d)$. By
H{\"o}lder's inequality we get
\begin{multline}
\Big(\int |T_F(f,g)(x)|^{q'_0} \, d x\Big)^{1/q'_0}
\\[1 ex]
\leq \Big ( \int \Big [ \Big ( \int |F(x ,y )|^{r} \, dy
\Big )^{1/r} \Big ( \int | f(y)|^{r'} |g(x -y
)|^{r'}\, dy \Big) ^{1/r'} \Big ] ^{q'_0}\, dx \Big ) ^{1/q'_0}.
\end{multline}
Next we use the assumption $\masfR (q) \leq 1/p_0$, that is
$ r \geq p_0$  and Young's inequality to obtain
\begin{multline}\label{TFcomputations}
\Big(\int |T_F(f,g)(x)|^{q'_0} \, d x\Big)^{1/q'_0}
\\[1 ex]
\leq \nm {F}{L^{\infty ,r}_2} \Big ( \nm
{|f|^{r'}*|g|^{r'}}{L^{q'_0/r'}} \Big )^{1/r'} \leq \nm
{F}{L^{\infty,r}_2} \Big ( \nm {|f|^{r'}}{L^{r_1}} \nm
{|g|^{r'}}{L^{r_2}} \Big )^{1/r'}
\\[1 ex]
= \nm {F}{L^{\infty,r}_2} \nm{f}{L^{p_1}}\nm{g}{L^{p_2}},
\end{multline}
where $ r_1 = p_1/r' $ and $r_2 = p_2/r'$.
The result now follows from the fact that $C^\infty_0$ is dense in
$L^{p_1}$ and $L^{p_2}$ when $p_1, p_2 < \infty$.

\par

Next, assume that $p_1=\infty$ and $p_2<\infty$, and let $f\in
L^\infty$ and $g\in C_0^\infty$. Then, it follows that $T_F(f,g)$ is
well-defined, and that \eqref{TFcomputations} still holds. The
result now follows from the fact that $C_0^\infty$ is dense in
$L^{p_2}$. The case
$p_1<\infty$ and $p_2=\infty$ follows analogously.

\par

Finally, if $p_1=p_2=\infty$, then the assumptions
implies that $r=1$ and $p'_0=\infty$. The inequalities
\eqref{TFL2est1} and \eqref{TFL2est2} then follow  by H{\"o}lder's inequality.

\par

(2) First we consider the case $r\ge p_1$. Let $h\in C_0(\rr d)$
when $r <\infty$ and $h\in L^1(\rr d)$ if $r=\infty$. Also let
$F\in L^{r,\infty}_1(\rr {2d})$ and $F_0(y,x)=F(x,y)$
and $\check g(x)=g(-x)$. By \cite[page 354]{PTT2}, we have
$ |\left \langle T_F (f,g) , h \right \rangle | = | \left \langle
T_{F_0}(h,\check g),f \right \rangle | $.
Then (1) implies
\begin{multline*}
|\left \langle T_F (f,g) , h \right \rangle | = | \left \langle
T_{F_0}(h,\check g),f \right \rangle |
\\[1 ex]
\leq \nm {T_{F_0}(h, \check g)}{L^{p_1'}} \nm {f}{L^{p_1}} \leq
\nm{F_0}{L_2^{\infty ,r}} \nm {f}{L^{p_1}} \nm {h}{L^{p_0}}
\nm{g}{L^{p_2}}
\\[1 ex]
\leq
\nm{F}{L_1^{r,\infty}} \nm {f}{L^{p_1}} \nm {h}{L^{p_0}}
\nm{g}{L^{p_2}}.
\end{multline*}

\par

Next, assume that $r \ge 2$ and $F\in L^{r,\infty}_1(\rr {2d})$.
We will prove the assertion by interpolation.
First we consider the case $r=\infty$. Then
$ \masfR (p) = 0$, and
\begin{multline*}
\Big \Vert \int F(x,y)f(y)g(x-y)\, dy \Big \Vert
_{L^{p'_0}} \leq \nm{F}{L^{\infty,\infty}_1} \nm{|f|*|g|}{L^{p'_0}}
\\[1 ex]
\leq
\nm{F}{L^{\infty,\infty}_1}\nm{f}{L^{p_1}}\nm{g}{L^{p_2}}.
\end{multline*}

\par

For the case $r=2$ we have $\masfR (p)=1/2$.
By letting
$$
M =\nm {F}{L^{2,\infty}_1}, \quad  \theta = \frac {( \nm {g}{L^{2r_1}}
\nm{h}{L^{2r_2}} )^{1/p_1} }{ \nm {f}{L^{p_1}}^{1/p_1'}},\quad
r_1=p_2/2\quad \text{and} \quad r_2=p_0/2,
$$
it follows from Cauchy-Schwartz inequality, the weighted
arithmetic-geometric mean-value inequality and Young's inequality that
\begin{multline*}
|\left \langle T_F(f,g),h \right \rangle |
%
%
\leq \int \Big ( \int |F(x ,y )| |g(x -y )| |h(x )|\,
dx \Big ) |f(y )| \, dy
\\[1 ex]
\le M \int \Big ( \int |g(x - y) |^2 |h(x )|^2 \, dx \Big )
^{1/2} |f(y )| \, d y
\end{multline*}
\\[-4ex]
$$
\leq M \int \Big ( \frac{\theta^{p_1}}{p_1} |f(y)|^{p_1} +
\frac{1}{p_1' \theta^{p_1'}} \Big ( \int |g(x -y )|^2
|h(x )|^2 \, dx \Big ) ^{p_1'/2} \Big ) \, dy
$$
\begin{multline*}
=  M \Big ( \frac{\theta^{p_1}}{p_1} \nm{f}{L^{p_1}}^{p_1} +
\frac{1}{p_1' \theta^{p_1'}} \nm {|g|^2 * |h| ^2}
{L^{p_1'/2}}^{p_1'/2} \Big )
\\[1 ex]
\leq M \Big ( \frac {\theta ^{p_1}}{p_1} \nm {f}{L^{p_1}}^{p_1} +
\frac{1}{p_1' \theta ^{p_1'}} ( \nm {|g|^2}{L^{r_1}} \nm
{|h|^2}{L^{r_2}})^{p_1'/2} \Big )
\\[1 ex]
= M \Big ( \frac { \theta^{p_1}}{p_1} \nm {f}{L^{p_1}}^{p_1} +
\frac {1}{p_1' \theta ^{p_1'}} (\nm {g} {L^{2r_1} } \nm
{h}{L^{2r_2}}) ^{p_1'} \Big)
\\[1 ex]
=  M \Big (\frac {1}{p_1}+\frac {1}{p_1'} \Big ) \nm {f}{L^{p_1}}\nm
{g}{L^{p_2}} \nm {h}{L^{p_0}}
\\[1 ex]
=  M \nm {f}{L^{p_1}} \nm {g}{L^{p_2}} \nm{h}{L^{p_0}}.
\end{multline*}
This gives the result for $r=2$.

\par

Since we also have proved the result for $r=\infty$. The
assertion (2) now follows for general $r\in [2,\infty ]$ by
multi-linear interpolation, using Theorems 4.4.1, 5.1.1 and 5.1.2 in \cite{BL}.
%

\par

The assertion (3) follows by similar arguments as in the proof of
(2). The details are left for the reader. The proof is complete.

\end{proof}

\subsection{Some lemmas}\label{subsec4.2}
Before the proof of Theorem \ref{proptogether}, we need some preparation,
and formulate auxiliary results in three Lemmas.

\par

First, we recall \cite[Lemma 3.5]{PTT2} which concerns
different integrals of the function
\begin{equation}\label{Fdef}
F(x ,y ) = \eabs x ^{-t_0}\eabs {x -y}^{-t_1}\eabs y
^{-t_2}, \quad x ,y \in \rr d,
\end{equation}
where $t_j\in \mathbf R$, $j=0,1,2$. These integrals, with respect to
$x$ or $y$, are taken
over the sets
\begin{equation}\label{theomegasets}
\begin{aligned}
\Omega _1 &= \sets{ (x, y) \in \rr {2d}}{\eabs y < \delta
\eabs x },
\\[1ex]
\Omega _2 &= \sets{ (x, y) \in \rr {2d}}{\eabs {x -y }<
\delta \eabs x },
\\[1ex]
\Omega _3 &= \sets{ (x, y) \in \rr {2d}}{\delta \eabs x \le
\min (\eabs y ,\eabs {x -y }),\ |x |\le R},
\\[1ex]
\Omega _4 &= \sets{ (x, y) \in \rr {2d}}{\delta \eabs x
  \le \eabs {x -y }\le  \eabs y,\ |x |> R},
\\[1ex]
\Omega _5 &= \sets{ (x, y) \in \rr {2d}}{\delta \eabs x \le
  \eabs y \le \eabs {x -y },\ |x |> R},
\end{aligned}
\end{equation}
for some positive constants $\delta$ and $R$.
By $ \chi _{\Omega _j} $ we denote the characteristic function
of the set $ \Omega_j,$ $ j = 1,\dots, 5.$

\par

\begin{lemma}\label{intestimates}
Let $F$ be given by \eqref{Fdef} and let $\Omega _1,\dots,\Omega _5$
be given by \eqref{theomegasets}, for some
constants $0<\delta <1$ and $R\ge 4/\delta$. Also let $p\in [1,\infty ]$
and $ F_j=\chi _{\Omega _j}F$, $ j = 1,\dots, 5.$ Then 
the following is true:
\begin{enumerate}
\item
$$
\nm {F_1(x ,\cdo  )}{L^p}\lesssim
\begin{cases}
\eabs x ^{-t_0-t_1}\big ( 1+\eabs x
^{-t_2+d/p} \big ),& t_2\neq d/p ,
\\[1ex]
\eabs x ^{-t_0-t_1}\big ( 1+\log
\eabs x  \big )^{1/p},& t_2= d/p \text;
\end{cases}
$$

\vrum

\item
$$
\nm {F_2(x ,\cdo  )}{L^p}\lesssim
\begin{cases}
\eabs x ^{-t_0-t_2}\big ( 1+\eabs x
^{-t_1+d/p} \big ),& t_1\neq d/p ,
\\[1ex]
\eabs x ^{-t_0-t_2}\big ( 1+\log
\eabs x  \big )^{1/p},& t_1= d/p \text;
\end{cases}
$$

\vrum

\item $\nm {F_3(\cdo ,y )}{L^p}\lesssim \eabs y ^{-t_1-t_2}$;

\vrum

\item if $j=4$ or $j=5$, then
$$
\nm {F_j(\cdo ,y )}{L^p}\lesssim
\begin{cases}
\eabs y ^{-t_0-t_1-t_2+d/p},& t_0 < d/p ,
\\[1ex]
\eabs y ^{-t_1-t_2}\big ( 1+\log
\eabs y  \big )^{1/p},& t_0 = d/p,
\\[1ex]
\eabs y ^{-t_1-t_2},& t_0 > d/p .
\end{cases}
$$
\end{enumerate}
\end{lemma}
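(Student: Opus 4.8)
\textbf{Proof strategy for Lemma \ref{intestimates}.}

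The plan is to reduce everything to a handful of elementary one-variable weighted integrals over regions of $\rr d$ controlled by a single scale, namely $\eabs x$ or $\eabs y$, and then to integrate a power times a logarithmic correction. For each item $j$ one coordinate is ``large'' or of a fixed comparable size and we integrate $|F_j|^p$ in the remaining coordinate. The basic building block is the estimate
\begin{equation*}
\int _{\eabs z < A} \eabs z ^{-a}\, dz \lesssim
\begin{cases}
1, & a > d,
\\
(1+\log A)^{}, & a = d,
\\
A^{d-a}, & a < d,
\end{cases}
\qquad
\int _{\eabs z > A} \eabs z ^{-a}\, dz \lesssim A^{d-a} \quad (a > d),
\end{equation*}
together with the trivial observation that on $\Omega_1$ we have $\eabs{x-y}\sim \eabs x$, on $\Omega_2$ we have $\eabs y\sim\eabs x$, and on $\Omega_3$ all three of $\eabs x,\eabs y,\eabs{x-y}$ are comparable to each other and bounded (since $|x|\le R$ and $\delta\eabs x\le\min(\eabs y,\eabs{x-y})$, while on $\Omega_4,\Omega_5$ one has $\eabs y\gtrsim\eabs{x-y}\gtrsim\delta\eabs x$ so the two largest are comparable.

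For items (1) and (2) I would fix $x$ and integrate in $y$. On $\Omega_1$, $\eabs{x-y}\sim\eabs x$, so $F_1(x,y)\sim\eabs x^{-t_0-t_1}\eabs y^{-t_2}$, and the region $\eabs y<\delta\eabs x$ is a ball of radius $\sim\eabs x$; applying the building block with $A\sim\eabs x$ and $a=pt_2$ (after raising to the $p$-th power) gives exactly the three-line answer in (1), the logarithmic case occurring precisely when $pt_2=d$, i.e. $t_2=d/p$. Item (2) is identical with the roles of $t_1$ (via $x-y$) and $t_2$ (via $y$) swapped, using $\eabs y\sim\eabs x$ on $\Omega_2$ and substituting $w=x-y$. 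For item (3) I fix $y$ and integrate in $x$ over $\Omega_3$: here all three factors are bounded below and above by positive constants times each other, so $F_3(x,y)\sim\eabs y^{-t_0-t_1-t_2}$ on a set of $x$ of bounded measure; but since $\eabs x\sim\eabs y$ there and $|x|\le R$, we also have $\eabs y\lesssim 1$, which lets us absorb the extra factor $\eabs y^{-t_0}\sim 1$ and land on $\eabs y^{-t_1-t_2}$ as claimed.

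For item (4), the two cases $j=4,5$ are symmetric in the pair $(y,x-y)$, so it suffices to treat $\Omega_4$, where $|x|>R$, $\delta\eabs x\le\eabs{x-y}\le\eabs y$. Here $\eabs y\gtrsim\eabs{x-y}\gtrsim\delta\eabs x$; I would further split according to which of $\eabs x$ and $\eabs{x-y}$ is of order $\eabs y$. When $\eabs{x-y}\sim\eabs x$, then $F_4(x,y)\sim\eabs x^{-t_0-t_1}\eabs y^{-t_2}$ with $x$ ranging over an annulus-type region where $\eabs x$ runs from $\sim\eabs y$ down to... — but in fact on $\Omega_4$ we always have $\eabs x\lesssim\eabs y$, so the dangerous part is small $\eabs{x-y}$, i.e. $x$ near $y$. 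I would therefore estimate $F_4(x,y)\lesssim\eabs x^{-t_0}(\delta\eabs x)^{-t_1}\eabs y^{-t_2}$ when $t_1\ge 0$ and $F_4(x,y)\lesssim\eabs x^{-t_0}\eabs y^{-t_1}\eabs y^{-t_2}$ when $t_1<0$ — more carefully, bound $\eabs{x-y}^{-t_1}$ by the appropriate endpoint and then integrate $\eabs x^{-pt_0}$ over the region $\delta\eabs x\le\eabs x\le C\eabs y$ (combined with the near-$y$ piece where $\eabs{x-y}$ is the small parameter and contributes a bounded integral since there $\eabs x\sim\eabs y$). The building block with $A\sim\eabs y$, $a=pt_0$ produces the three regimes $t_0<d/p$, $t_0=d/p$, $t_0>d/p$ with the stated powers and logarithm.

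The main obstacle I anticipate is the careful bookkeeping in item (4): one must correctly identify, inside $\Omega_4$, the sub-region responsible for each term, verify that the contribution of $x$ near $y$ (where $\eabs{x-y}$ is small but $\eabs x\sim\eabs y$) is harmless, and check that the powers of $\eabs y$ coming from $\eabs{x-y}$ and $\eabs y$ always combine to exactly $-t_1-t_2$ before the $\eabs x$-integration is performed, regardless of the signs of $t_1,t_2$. Once that geometry is pinned down, each case reduces to one application of the elementary weighted-ball/annulus integral above. Since this is \cite[Lemma 3.5]{PTT2} restated, I would in practice simply cite that reference and indicate that the present $F$ and $\Omega_j$ coincide with theirs, rather than reproduce the computation in full.
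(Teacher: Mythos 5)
Your overall strategy (reduce each region to a one-variable radial integral of $\eabs z^{-a}$ over a ball of radius $\asymp \eabs x$ or $\asymp \eabs y$) is the right one, and items (1) and (2) are handled correctly; note also that the paper itself gives no proof of this lemma and simply refers to \cite[Lemma 3.5]{PTT2}, so your final fallback of citing that result is exactly what the paper does. However, two of your geometric claims are false, and one of them breaks the argument. On $\Omega _3$ it is not true that $\eabs x$, $\eabs y$, $\eabs {x-y}$ are all comparable and bounded: the conditions $\delta \eabs x\le \min (\eabs y,\eabs {x-y})$ and $|x|\le R$ only bound $\eabs x$, while $\eabs y$ may be arbitrarily large, so your step ``$\eabs y\lesssim 1$, absorb $\eabs y^{-t_0}$'' is unavailable. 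What is true is that $\eabs x\asymp 1$ (hence $\eabs x^{-t_0}\asymp 1$) and $\eabs {x-y}\asymp \eabs y$ on $\Omega _3$, and integrating over the bounded ball $|x|\le R$ then gives (3).

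The genuine gap is in your treatment of $\Omega _4$ (and by your symmetry remark, $\Omega _5$). Bounding $\eabs {x-y}^{-t_1}$ by $(\delta \eabs x)^{-t_1}$ when $t_1\ge 0$ loses too much: you are then left with $\int \eabs x^{-p(t_0+t_1)}dx$ over $\eabs x\lesssim \eabs y$, and in the regime $t_0<d/p<t_0+t_1$ this produces $\eabs y^{-t_2}$, which is strictly larger than the claimed $\eabs y^{-t_0-t_1-t_2+d/p}$, so the stated estimate is not recovered there. The missing observation is that on $\Omega _4$ one has the two-sided comparison $\eabs {x-y}\asymp \eabs y$: indeed $\eabs {x-y}\le \eabs y$ by definition of $\Omega _4$, while $\eabs y\le 2\eabs x+\eabs {x-y}\le (1+2/\delta )\eabs {x-y}$ because $\delta \eabs x\le \eabs {x-y}$. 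Hence $F_4(x,y)\asymp \eabs x^{-t_0}\eabs y^{-t_1-t_2}$ on $\Omega _4$, and integrating $\eabs x^{-pt_0}$ over $\{ \eabs x\le \eabs y/\delta \}$ immediately yields the three regimes $t_0<d/p$, $t_0=d/p$, $t_0>d/p$ of item (4); the same comparison holds verbatim on $\Omega _5$ (there $\eabs y\le \eabs {x-y}\le 2\eabs x+\eabs y\le (1+2/\delta )\eabs y$), so no splitting according to which of $\eabs x$, $\eabs {x-y}$ is of order $\eabs y$, and no case distinction on the sign of $t_1$, is needed.
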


\par

We refer to \cite{PTT2} for the proof of Lemma \ref{intestimates}.

\par

Next we estimate each of the auxiliary
functions $ T_{F_j}, $
defined by \eqref{TFmap} with $F$ replaced by $ F_j $, $ j = 1,\dots, 5$.

\par

\begin{lemma} \label{estimatesTFj}
Let $\masfR (p)$, $F$ and $T_F $ be given by \eqref{Rqfunctional}, \eqref{Fdef}
and \eqref{TFmap} respectively, and let
$\Omega _1,\dots,\Omega _5$ be given by \eqref{theomegasets}, for some
constants $0<\delta <1$ and $R\ge 4/\delta$.
Moreover, let
$F_j=\chi _{\Omega _j}F$, $ j = 1,\dots, 5$, and
$u_j = \eabs \cdo ^{t_j} f_j$, $ j=1,2$.
Then the estimate
\begin{equation*}
\nm {T_ {F_j} (u_1, u_2)}{L^{p_0'}}
\lesssim  \nm {f_1}{L ^ {p_1} _{t_1}}  \nm {f_2}{L ^ {p_2} _{t_2}}
\end{equation*}
holds when:
\begin{enumerate}
\item $j=1,2$, for $\masfR (p)\le 1/p_0$, $0 \leq t_0 + t_1$, $ 0\leq t_0 + t_2$ and
$$
0  \leq t_0 + t_1 + t_2 - d \cdot \masfR (p),
$$
where the above inequality is strict when $ t_1 = d\cdot \masfR (p)$
or $  t_2 = d \cdot \masfR (p)$.

\vrum

\item $j=3$, for
$$
\begin{cases}
\masfR (p) \le \min(1/p_1,1/p_2) & \text {when} \quad  p_1, p_2 < 2 ,
\\[1 ex]
\masfR (p) \le 1/2_{\phantom 2}
& \text {when}  \quad p_1\geq 2 \quad \text{or} \quad p_2\geq 2,
\end{cases}
$$
and
$$
0 \leq t_1 + t_2\text ;
$$

\vrum

\item $j=4$ for
$
\masfR (p) \le \max(1/p_2,1/2),
%
$
$$
0 \leq t_1 + t_2
\quad
\text{and}
\quad
0  \leq t_0 + t_1 + t_2 - d\cdot \masfR (p),
$$
with $ 0< t_1 + t_2 $ when
$ t_0 =   d \cdot \masfR (p)$;

\vrum

\item $j=5$, for
$
\masfR (p) \le \max(1/p_1,1/2),
$
$$
0 \leq t_1 + t_2
\quad
\text{and}
\quad
0  \leq t_0 + t_1 + t_2 - d\cdot \masfR (p),
$$
with $ 0< t_1 + t_2 $ when $ t_0 =  d \cdot \masfR (p)$.
\end{enumerate}
\end{lemma}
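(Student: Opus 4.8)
The plan is to combine the two ingredients that have already been set up: the continuity of the trilinear maps $T_F$ from Proposition \ref{prop1}, applied with $F$ replaced by each truncated kernel $F_j=\chi_{\Omega_j}F$, and the pointwise decay estimates on the mixed $L^p$-norms of $F_j$ from Lemma \ref{intestimates}. The key observation is that $T_{F_j}(u_1,u_2)=T_{F_j}(\eabs\cdo^{t_1}f_1,\eabs\cdo^{t_2}f_2)$, and Proposition \ref{prop1} bounds this by $\nm{F_j}{X}\,\nm{\eabs\cdo^{t_1}f_1}{L^{p_1}}\,\nm{\eabs\cdo^{t_2}f_2}{L^{p_2}}=\nm{F_j}{X}\,\nm{f_1}{L^{p_1}_{t_1}}\,\nm{f_2}{L^{p_2}_{t_2}}$, where $X$ is one of the spaces $L^{\infty,r}_2$ or $L^{r,\infty}_1$ with $r=1/\masfR(p)$. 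Thus the entire lemma reduces to checking, in each of the four cases, that (i) the hypothesis on $\masfR(p)$ matches the hypothesis of the appropriate part of Proposition \ref{prop1}, and (ii) the relevant mixed norm $\nm{F_j}{X}$ is finite under the stated weight conditions.

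For $j=1,2$ one uses part (1) of Proposition \ref{prop1} (whose hypothesis is exactly $\masfR(p)\le 1/p_0$), so one must show $\nm{F_j}{L^{\infty,r}_2}<\infty$. This means taking $\sup_x\big(\int |F_j(x,y)|^r\,dy\big)^{1/r}$, i.e.\ computing $\sup_x \nm{F_j(x,\cdo)}{L^r}$, and Lemma \ref{intestimates}(1)--(2) gives this inner norm as $\eabs x^{-t_0-t_1}(1+\eabs x^{-t_2+d/r})$ (or the logarithmic variant when $t_2=d/r=d\cdot\masfR(p)$), and symmetrically for $j=2$ with $t_1\leftrightarrow t_2$. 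Here $d/r = d\cdot\masfR(p)$, so the exponent of $\eabs x$ in the two terms is $-(t_0+t_1)$ and $-(t_0+t_1+t_2-d\cdot\masfR(p))$ respectively; boundedness in $x$ forces $t_0+t_1\ge0$ and $t_0+t_1+t_2-d\cdot\masfR(p)\ge0$, with strictness exactly in the borderline $t_2=d\cdot\masfR(p)$ case (where the power is replaced by a positive power of a logarithm unless the next weight is strictly positive). This is precisely the stated hypothesis; the case $t_2=d\cdot\masfR(p)$, or $t_1=d\cdot\masfR(p)$ for $j=2$, is the one place where a strict inequality is genuinely needed.

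For $j=3$ one instead uses parts (2)–(3) of Proposition \ref{prop1}, which require $\masfR(p)\le\max(1/2,1/p_1)$ and $\masfR(p)\le\max(1/2,1/p_2)$; the stated dichotomy (both $p_1,p_2<2$ forces $\masfR(p)\le\min(1/p_1,1/p_2)$, otherwise $\masfR(p)\le 1/2$ suffices) is exactly the conjunction of these two conditions. One then estimates $\nm{F_3}{L^{r,\infty}_1}=\sup_y\nm{F_3(\cdo,y)}{L^r}$, and Lemma \ref{intestimates}(3) gives $\nm{F_3(\cdo,y)}{L^r}\lesssim\eabs y^{-t_1-t_2}$, which is bounded in $y$ iff $t_1+t_2\ge0$. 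Cases $j=4,5$ are entirely parallel: use part (2) (resp.\ (3)) of Proposition \ref{prop1} with the hypothesis $\masfR(p)\le\max(1/2,1/p_2)$ (resp.\ $\max(1/2,1/p_1)$), and feed in Lemma \ref{intestimates}(4), whose three branches depending on the sign of $t_0-d/r = t_0-d\cdot\masfR(p)$ give respectively $\eabs y^{-(t_0+t_1+t_2-d\cdot\masfR(p))}$, $\eabs y^{-t_1-t_2}(1+\log\eabs y)^{1/r}$, and $\eabs y^{-t_1-t_2}$. Boundedness in $y$ requires $t_1+t_2\ge0$ and $t_0+t_1+t_2-d\cdot\masfR(p)\ge0$ in the first branch, and in the borderline branch $t_0=d\cdot\masfR(p)$ the logarithm must be absorbed, forcing $t_1+t_2>0$ strictly — exactly the stated condition.

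The only genuine subtlety — the ``hard part'' — is bookkeeping the borderline cases: matching $d/p$ in Lemma \ref{intestimates} with $d\cdot\masfR(p)$ (so that $p$ in the lemma is read as $r=1/\masfR(p)$), correctly tracking which of the three weight sums appears as the decisive exponent in each $\Omega_j$, and verifying that in every critical equality case a logarithmic factor is genuinely killed by a strictly positive remaining power. When $\masfR(p)=0$ one has $r=\infty$, the logarithmic powers $(1+\log\eabs\cdot)^{1/r}$ degenerate to the constant $1$, and the branch conditions $t_j\ne d\cdot\masfR(p)$ reduce to $t_j\ne0$; one checks these degenerate consistently with the statement. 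No interpolation or density argument is needed beyond what is already contained in Proposition \ref{prop1}, so once the norm computations are assembled the lemma follows.
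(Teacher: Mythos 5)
Your proposal is correct and follows essentially the paper's own proof: apply Proposition \ref{prop1} to each $T_{F_j}$ with $r=1/\masfR (p)$, and verify finiteness of $\nm {F_j}{L^{\infty ,r}_2}$ (for $j=1,2$) resp. $\nm {F_j}{L^{r,\infty}_1}$ (for $j=3,4,5$) via Lemma \ref{intestimates}, the borderline logarithmic branches accounting exactly for the strict inequalities. One small slip worth fixing: you have parts (2) and (3) of Proposition \ref{prop1} interchanged for $j=4,5$ --- the hypothesis $\masfR (p)\le \max (1/2,1/p_2)$ belongs to part (3) (the map \eqref{TGmap}) and is the one used for $j=4$, while $\masfR (p)\le \max (1/2,1/p_1)$ belongs to part (2) and is used for $j=5$; moreover, invoking part (3) for $T_{F_4}$ strictly speaking means feeding in the kernel $\Theta F_4$ rather than $F_4$, whose $L^{r,\infty}_1$ norm obeys the same bound since $\Theta$ maps $\Omega _4$ onto $\Omega _5$ and swaps $t_1$ with $t_2$, leaving the estimate of Lemma \ref{intestimates} (4) (which depends only on $t_1+t_2$ and $t_0$) unchanged --- a point the paper itself also glosses over.
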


\par

\begin{proof}
Let $r=1/\masfR (p)$.

\par

(1) The condition $\masfR (p)\le 1/p_0$ implies that $ r \geq p_0'$.
%
By Lemma \ref{intestimates} (1) it follows that
\begin{equation}\label{Fjest1}
\nm {F_1}{L^{\infty ,r}_2}<\infty
\end{equation}
when $0\leq t_0 + t_1$ and
$$
\begin{cases}
0\leq t_0 + t_1 + t_2 - d/r, \quad &\text {for} \quad  t_2 \neq d/ r
\\[1 ex]
0< t_0 + t_1,  \quad &\text {for} \quad  t_2 = d/ r.
\end{cases}
$$
Similarly, by Lemma \ref{intestimates} (2) it follows that
\begin{equation}\label{Fjest2}
\nm {F_2}{L^{\infty ,r}_2}<\infty
\end{equation}
when $0\leq t_0 + t_2$ and
$$
\begin{cases}
0 \leq t_0 + t_1 + t_2 - d/r, \quad &\text {for} \quad  t_1 \neq d/ r
\\[1 ex]
0 < t_0 + t_2, \quad &\text {for} \quad  t_1 = d / r.
\end{cases}
$$
This, together with Proposition \ref{prop1} (1) gives
$$
\nm {T_ {F_j} (u_1, u_2)}{L^{p_0'}}
\lesssim
 \nm {f_1}{L ^ {p_1} _{t_1}}  \nm {f_2}{L ^ {p_2} _{t_2}}, \quad j = 1,2.
$$

\par

(2) By Lemma \ref{intestimates} (3) we have
\begin{equation}\label{Fjest3}
\nm {F_3}{L^{r_0,\infty }_1}<\infty ,
\end{equation}
when  $t_1 + t_2\geq 0$ and $ r_0 \in [1,\infty]$.
In particular, if $r_0=r=1/\masfR (p)$ and $r \geq \min (2, \max (p_1, p_2))$, then
it follows from Proposition \ref{prop1} (2) and (3) that
\begin{equation*}
\nm {T_ {F_3} (u_1, u_2)}{L^{p'_0}}
\leq
C \nm {f_1}{L ^ {p_1} _{t_1}}\nm {f_2}{L ^ {p_2} _{t_2}}.
\end{equation*}
This gives (2).

\par

Next consider $T_{F_4}$ and $T_{F_5}$.
By Lemma \ref{intestimates} (4) it follows that
\begin{equation}\label{Fjest4-5}
\nm {F_4}{L^{r,\infty}_1}<\infty \quad \mbox{and} \quad
\nm {F_5}{L^{r,\infty}_1}<\infty
\end{equation}
when
$$
\begin{cases}
-t_0-t_1-t_2 + d / r \leq 0,
\quad & t_0 < d / r
\\[1 ex]
t_1 + t_2 > 0, \quad & t_0 =  d / r
\\[1 ex]
t_1 + t_2 \geq 0, \quad & t_0 > d / r.
\end{cases}
$$
If $t_0 < d / r$ and  $-t_0-t_1-t_2 + d / r \leq 0$, then $ t_1 + t_2 > 0$.
Therefore \eqref{Fjest4-5} holds when
$$
0 \leq t_1 + t_2
$$
and
$$
0  \leq t_0 + t_1 + t_2 - d / r,
$$
with $ 0< t_1 + t_2 $ when $ t_0 =  d / r $.
Hence Proposition \ref{prop1} (3) gives
\begin{equation*}
\nm {T_ {F_4} (u_1, u_2)}{L^{p_0'}} \lesssim \nm {f_1}{L ^ {p_1} _{t_1}}
\nm {f_2}{L ^ {p_2} _{t_2}}
\end{equation*}
for $r \geq \min (2, p_2)$, and (3) follows.

\par

Finally, by Proposition \ref{prop1} (2)
we get that
\begin{equation*}
\nm {T_ {F_5} (u_1, u_2)}{L^{p'_0}} \lesssim \nm {f_1}{L ^ {p_1} _{t_1}}
\nm {f_2}{L ^ {p_2} _{t_2}}
\end{equation*}
when
$r \geq \min (2, p_1)$. This gives (4), and the proof is complete.
\end{proof}

\par

In the following lemma we give another view to Lemma \ref{estimatesTFj},
which will be used for the proof of Theorem \ref{proptogether}.

\par

\begin{lemma} \label{estimatesT_F}
Let $F$, $F_j$ and $u_j$ be the same as in Lemma \ref{estimatesTFj}.
Furthermore, assume that \eqref{lastineq2A}, $ 0 \leq \masfR (p) \leq
1/2$, and \eqref{lastineq2B}
hold, with strict inequality in \eqref{lastineq2B}
when $ t_1, t_2 $ or $t_0$ is equal to $ d \cdot \masfR (p)$. Then
\begin{equation*}
\nm {T_ {F_j} (u_1, u_2)}{L^{q'_0}}
\lesssim  \nm {f_1}{L ^ {p_1} _{t_1}}\nm {f_2}{L ^ {p_2} _{t_2}}
\end{equation*}
holds for every $j \in \{1,\dots, 5 \}$.

\par

Furthermore, if the conditions in \eqref{lastineq2A} and \eqref{lastineq2B}
are violated, then at least one of the relations in {\rm{(1)-(5)}}
in Lemma \ref{estimatesTFj} is violated.
\end{lemma}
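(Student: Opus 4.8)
The statement is a packaging of Lemma \ref{estimatesTFj} into the single set of hypotheses \eqref{lastineq2A}, \eqref{lastineq2B} together with $0\le \masfR(p)\le 1/2$. The plan is to check, for each $j\in\{1,\dots,5\}$, that the hypotheses assumed here imply the hypotheses of the corresponding item of Lemma \ref{estimatesTFj}, and then to invoke that lemma directly. The work therefore splits into two parts: (i) verifying the weight (i.e.\ $t_j$) conditions match up, and (ii) verifying the exponent (i.e.\ $\masfR(p)$) conditions match up. Part (i) is essentially immediate: \eqref{lastineq2A} is exactly $0\le t_0+t_1$, $0\le t_0+t_2$, $0\le t_1+t_2$, which covers the weight hypotheses in items (1)--(5) of Lemma \ref{estimatesTFj} (note that \eqref{lastineq2A} implies $0\le t_0+t_1+t_2$, hence in particular $t_1+t_2\ge 0$ follows for the $j=3,4,5$ cases); and \eqref{lastineq2B} is exactly $0\le t_0+t_1+t_2-d\cdot\masfR(p)$, with the strictness caveat transferred verbatim. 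The only subtlety is that Lemma \ref{estimatesTFj}(1) asks for strictness when $t_1=d\cdot\masfR(p)$ \emph{or} $t_2=d\cdot\masfR(p)$, while (4) and (5) ask for it when $t_0=d\cdot\masfR(p)$; the hypothesis stated here (strictness when any of $t_0,t_1,t_2$ equals $d\cdot\masfR(p)$) is strong enough to cover all three simultaneously.

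The real content is part (ii), the exponent bookkeeping, and this is where Lemma \ref{1/2} enters. By Lemma \ref{1/2} (with $x_j=1/p_j$), the hypothesis $0\le\masfR(p)\le 1/2$ is equivalent to $0\le\masfR(p)\le H_l(x)$ for every $l\in\{0,1,2\}$, and unwinding $H_1$ via \eqref{H1def} one reads off exactly the case distinctions appearing in Lemma \ref{estimatesTFj}. Concretely: when all of $1/p_0,1/p_1,1/p_2>1/2$ (equivalently all $p_j<2$), $0\le\masfR(p)\le 1/2$ forces $\masfR(p)=0$ (from the computation $G(x)<1/2$ in the proof of Lemma \ref{1/2}), so $\masfR(p)\le\min(1/p_1,1/p_2)$ trivially, handling the first branch of (2); in the remaining cases at least one $p_j\ge 2$, and then $H_2(x)=\max(1/2,\min(1/p_0,1/p_1,1/p_2))\ge 1/2$, so $\masfR(p)\le 1/2\le H_2(x)$ handles the second branch of (2). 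For (1) one needs $\masfR(p)\le 1/p_0$, which is $\masfR(p)\le x_0$; for (4) one needs $\masfR(p)\le\max(1/p_2,1/2)$, i.e.\ $\masfR(p)\le\max(x_2,1/2)$; for (5), $\masfR(p)\le\max(x_1,1/2)$. Each of these follows by observing that $H_l(x)$ is dominated by the relevant quantity in the regime where that item is applied, or — more uniformly — that $\max(1/2,x_j)\ge H_0(x)\ge\masfR(p)$ for each $j$, and that $1/p_0=x_0$ together with the symmetry built into $H_0$ gives $\masfR(p)\le H_0(x)\le\max(1/2,\min(x_0,x_1,x_2))\le\max(1/2,x_j)$; the case $\masfR(p)\le 1/p_0$ for (1) requires a slightly more careful argument showing $0\le G(x)\le 1/2$ implies $G(x)\le x_0$, which is again exactly the kind of elementary inequality established inside the proof of Lemma \ref{1/2} (indeed $G(x)\le 1/2$ and $x_0<1/2$ would force, via the analysis there, a contradiction unless $G(x)=0$).

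Having matched hypotheses, one simply concludes: for $j=1,2$ apply Lemma \ref{estimatesTFj}(1); for $j=3$ apply (2); for $j=4$ apply (3); for $j=5$ apply (4). This yields $\nm{T_{F_j}(u_1,u_2)}{L^{p_0'}}\lesssim\nm{f_1}{L^{p_1}_{t_1}}\nm{f_2}{L^{p_2}_{t_2}}$ for all $j$, which is the first assertion. For the converse assertion — if \eqref{lastineq2A} or \eqref{lastineq2B} is violated then some item (1)--(5) fails — one argues contrapositively: each item of Lemma \ref{estimatesTFj} requires $0\le t_0+t_1$ and $0\le t_0+t_2$ (for $j=1,2$), or $0\le t_1+t_2$ (for $j=3,4,5$), together with \eqref{lastineq2B} (for $j=1,2,4,5$). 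Thus if \eqref{lastineq2B} fails, items (1),(3),(4),(5) all fail; if \eqref{lastineq2A} fails, say $t_0+t_1<0$, then items (1),(2) fail; each of the three inequalities in \eqref{lastineq2A} appears, by symmetry of the roles of $\Omega_1,\Omega_2$ and of the endpoints, as a necessary condition in at least one item. Collecting these, violation of either \eqref{lastineq2A} or \eqref{lastineq2B} kills at least one of (1)--(5).

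**Main obstacle.** The only genuinely nontrivial point is the exponent translation — confirming that $0\le\masfR(p)\le 1/2$ really does imply each of the scattered conditions $\masfR(p)\le 1/p_0$, $\masfR(p)\le\min(1/p_1,1/p_2)$ (when $p_1,p_2<2$), $\masfR(p)\le\max(1/2,1/p_2)$, $\masfR(p)\le\max(1/2,1/p_1)$ — and this is precisely what Lemma \ref{1/2} was designed to provide; so in practice the proof is a short citation of Lemma \ref{1/2} followed by the case-by-case invocation of Lemma \ref{estimatesTFj}. The weight inequalities and the necessity direction are routine.
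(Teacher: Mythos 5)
Your reduction is sound for the terms $j=3,4,5$: there the exponent hypotheses of Lemma \ref{estimatesTFj} are of the form $\masfR (p)\le \max (1/2,\,\cdot\,)$, or (in the branch $p_1,p_2<2$ of item (2)) follow from $\masfR (p)\le 1/2<\min (1/p_1,1/p_2)$, and your weight bookkeeping is correct (including the fact that strict inequality in \eqref{lastineq2B} at $t_0=d\cdot \masfR (p)$ is exactly $t_1+t_2>0$, which is what items (3) and (4) of Lemma \ref{estimatesTFj} ask for). The genuine gap is in the step handling $j=1,2$. Lemma \ref{estimatesTFj}(1) requires $\masfR (p)\le 1/p_0$, and your claimed implication that $0\le G(x)\le 1/2$ forces $G(x)\le x_0$ (``a contradiction unless $G(x)=0$'' when $x_0<1/2$) is false: take $(p_0,p_1,p_2)=(4,2,4/3)$, so that $\masfR (p)=2-\tfrac 14-\tfrac 12-\tfrac 34=\tfrac 12$ while $1/p_0=\tfrac 14<\masfR (p)$. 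Thus on part of the region $0\le \masfR (p)\le 1/2$ you invoke Lemma \ref{estimatesTFj}(1) outside its hypotheses, and Lemma \ref{1/2} cannot repair this: it only converts $0\le \masfR (p)\le 1/2$ into $0\le \masfR (p)\le H_l(x)$, and $H_l(x)$ need not be bounded by $x_0$. (Your side remark that $x_0,x_1,x_2>1/2$ forces $\masfR (p)=0$ is also incorrect, though harmless, since $\masfR(p)\le 1/2$ already suffices there.)

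This is precisely the point where the paper proceeds differently. The paper gives no separate proof of Lemma \ref{estimatesT_F}, but in the only place it is used, the proof of Theorem \ref{proptogether}, the term-by-term estimates are applied only under the stronger condition \eqref{ineqR1}, which contains exactly the missing inequality $\masfR (p)\le 1/p_0$; the full range $0\le \masfR (p)\le \mathsf H(p)$ is then recovered by dualizing the whole convolution estimate (permuting the roles of $p_0,p_1,p_2$ and of $t_0,t_1,t_2$), not by bounding each $T_{F_j}$ separately under the bare hypothesis $\masfR (p)\le 1/2$. So to close your argument you must either add $\masfR (p)\le 1/p_0$ (i.e.\ work under \eqref{ineqR1}) to the hypotheses, as the paper's application implicitly does, or supply a new argument for $T_{F_1},T_{F_2}$ in the regime $1/p_0<\masfR (p)\le 1/2$, e.g.\ a duality argument at the level of these individual terms; as written, the citation of Lemma \ref{estimatesTFj}(1) does not cover that regime. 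The necessity statement and the rest of your bookkeeping are fine.
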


\par

\subsection{Proof of main results}\label{subsec4.3}

%
Next we prove Theorems \ref{proptogether} and \ref{proptogetherMod}.

\par

\begin{proof}[Proof of Theorem \ref{proptogether}]
First we note that $ 0\leq \mathsf R (p) \leq 1/2$ is not fulfilled when all $p_j\ge 2$
and at least one
of them is strictly larger than $2$. The similar fact is true if the condition
$ 0\leq \mathsf R (p) \leq 1/2$
is replaced by
\begin{equation}\label{ineqH}
0\le \masfR (p) \le \mathsf H (p),
\end{equation}
where $\mathsf H(p)=H_1(1/p_1,1/p_2,1/p_3)$ and $H_1$ is the same
as in Lemma \ref{1/2}. Hence, we may replace the condition
$ 0\leq \mathsf R (p) \leq 1/2$ by \eqref{ineqH} when proving the proposition.

\par

First we assume that
\begin{equation}\label{ineqR1}
\masfR (p)\le \frac 1{p_0}\quad \text{and}\quad
\masfR (p)\le \max \left (  \frac 12, \min \left ( \frac 1{p_1},\frac 1{p_2}  \right ) \right ),
\end{equation}
and that \eqref{lastineq2B}
holds and  $f_j \in L^{p_j} _{t_j}$, $ j = 1,2$.
We express $ f_1* f_2$ in terms of $T_F $ given by \eqref{TFmap} and
$F$ given by \eqref{Fdef} as follows. Let $\Omega_j$,  $j=1,\dots ,5$,
be the same as in \eqref{theomegasets}
after $\Omega _2$ has been modified into
$$
\Omega _2 = \sets{ (\xi, \eta) \in \rr {2d}}{\eabs {\xi -\eta }<
\delta \eabs \xi }\setminus \Omega _1.
$$
Then $\cup \Omega _j=\rr {2d}$, $\Omega _j\cap
\Omega _k$ has Lebesgue measure zero when  $j\neq k$, and
\begin{multline*}
(f_1*f_2) (\xi) \eabs \xi ^{-t_0}  = \int F (\xi, \eta ) u_1 (\xi - \eta) u_2 (\eta) d\eta
=T_F(u_1,u_2)
\\[1ex]
=T_{F_1}(u_1,u_2)+\cdots + T_{F_5}(u_1,u_2)
\end{multline*}
where $ u_j (\cdo ) = \eabs \cdo  ^{t_j} f_j $, $ j=1,2$,
and $ F_j = \chi_{\Omega _j}F$, $j = 1,\dots, 5.$

\par

Now, Lemma \ref{estimatesT_F} implies that the $L^{p_0'}$ norm of each
of the terms $ T_{F_j}$, $ j = 1,\dots, 5$
is bounded by $C \nm {f_1}{L ^ {p_1} _{t_1}}\nm {f_2}{L ^ {p_2} _{t_2}} $
for some positive constant $C$
which is independent of  $f_1 \in L^{p_1}_{t_1}(\rr d)$
and $f_2\in L^{p_2}_{t_2}(\rr d)$.

\par

Hence, $f_1 *f_2 \in L^{p_0'} _{-t_0}$ when \eqref{ineqR1} holds.
By duality, the same conclusion holds when  the roles for $p_j$, $j=0,1,2$ have
been interchanged. By straight forward computations it follows that \eqref{ineqH}
is fulfilled if and only if \eqref{ineqR1} or one of the dual cases of \eqref{ineqR1}
are fulfilled. This gives the result.

The assertion (2) follows from (1) and the relation $\mathscr
F(f_1*f_2)= (2\pi )^{d/2}\widehat f_1\cdot \widehat f_2$.
\end{proof}

\par

\begin{proof}[Proof of Corollary \ref{coro-local}]
Let  $f_j\in \FL _{s_j, loc} ^{q_j} (X)$, $ j = 1,2 $ and let
$\phi \in C_0^\infty (X)$. Then we choose $\phi _1=\phi $ and $\phi _2 \in
C_0^\infty (X)$ such that $\phi _2 =1$ on $\supp \phi$. Since $\phi _jf_j
\in \FL _{s_j}^{q_j}$, the right-hand side of
$$
f_1f_2 \phi  = (f_1\phi _1)(f_2 \phi _2)
$$
is well-defined, and defines an element in $ \FL _{-s_0}^{q'_0}$, in the view of Theorem \ref{proptogether} (2).
The corollary now follows from \eqref{FLqFLqlocemb} .
\end{proof}

\par

\begin{proof}[Proof of Theorem \ref{proptogetherMod}]
The assertion (1) follows immediately from (2), Fourier's inversion formula and the
fact that the Fourier transform maps $M^{p,q}_{s,t}$ into
$W^{q,p}_{t,s}$. Hence it suffices to prove (2).

We first consider the case when $p_j,q_j<\infty$ for $j=1,2$. Then
$\mathscr S$ is dense in $M^{p_j,q_j}_{s_j,t_j}$ for $j=1,2$. Since $M^{p,q}_{s,t}$
decreases with $t$, and the map
$f\mapsto \eabs \cdo ^{t_0}f$ is a bijection from $M^{p,q}_{s,t+t_0}$ to
$M^{p,q}_{s,t}$, for every choices of $p,q\in [1,\infty]$ and $s,t,t_0\in
\mathbf R$, it follows that we may assume that $t_j=0$, $j=0,1,2$.

\par

We have
\begin{multline}\label{STFTproducts}
V_{\phi}(f_1 f_2) (x,\xi )
=(2\pi ) ^{-d/2}
\big ( V_{\phi _1}f_1 (x,\cdo ) * V_{\phi _2}f_2 (x,\cdo )\big )(\xi ),
\\[1ex]
\phi =\phi _1 \phi _2,\ \phi _j,f_j\in \mathscr S(\rr d),\ j=1,2,
\end{multline}
which follows by straight-forward application of Fourier's inversion formula.
Here the convolutions between the factors $(V_{\phi _j}f_j)(x,\xi )$, where $j=1,2$
should be taken over the $\xi$ variable only.

\par

By applying the $L^{p'_0}$ norm with respect to the $x$ variables
and using H{\"o}lder's inequality we get
$$
\nm {V_{\phi}(f_1f_2) (\cdo ,\xi )}{L^{p'_0}} \le (2\pi ) ^{-d/2} (v_1*v_2)(\xi),
$$
where $v_j =  \nm {V_{\phi _j}f_j (\cdo ,\eta )}{L^{p_j}} $. Hence by applying
the $L^{q'_0}_{-s_0}$ norm on the latter inequality and using Theorem
\ref{proptogether} we get
$$
\nm {f_1f_2}{M^{p'_0,q'_0}_{-s_0,0}}\lesssim \nm {v_1}{L^{q_1}_{s_1}}
\nm {v_2}{L^{q_2}_{s_2}}
\asymp \nm {f_1}{M^{p_1,q_1}_{s_1,0}}\nm {f_2}{M^{p_2,q_2}_{s_2,0}},
$$
and (2) follows in this case, since $\mathscr S$ is dense in
$M^{p_j,q_j}_{s_j,0}$ for $j=1,2$.

\par

For general $p_j$ and $q_j$, (2) follows from the latter inequality
and Hahn-Banach's theorem.

\par

Finally, by interchanging the order of integration it  follows that
the same is true when $M^{p_j,q_j}_{s_j,t_j}$
is replaced by $W^{p_j,q_j}_{s_j,t_j}$, $j=0,1,2$. The proof is complete.
\end{proof}

\par

\par

In order to prove Proposition \ref{otpmimality1}, we recall some facts concerning
compactly supported distributions and modulation spaces. By Proposition
4.1 and Remark 4.6 in \cite{RSTT} we have
$$
M^{p,q}_{s,t}\cap \mathscr E' = W^{p,q}_{s,t}\cap \mathscr E' =
\mathscr FL^q_s \cap \mathscr E' ,
$$
and that for every compact set $K\subseteq \rr d$, then
$$
\nm f{M^{p,q}_{s,t}}\asymp \nm f{W^{p,q}_{s,t}}\asymp
\nm f{\mathscr FL^{q}_{s}},\quad f\in \mathscr S'(\rr d),\ \supp f\subseteq K.
$$
In particular,
$$
\nm {f\, \fy}{M^{p,q}_{s,t}}\asymp \nm {f\, \fy}{W^{p,q}_{s,t}}\asymp
\nm {f\, \fy}{\mathscr FL^{q}_{s}},\quad f\in \mathscr S'(\rr d),\ \fy \in C_0^\infty (K).
$$
By applying the Fourier transform, and using the fact that $\mathscr
FM^{p,q}_{s,t}=W^{q,p}_{t,s}$, we get
\begin{multline}\label{Fsuppidentities}
\nm {f* \fy}{M^{p,q}_{s,t}}\asymp \nm {f* \fy}{W^{p,q}_{s,t}}\asymp
\nm {f* \fy}{\mathscr FL^{q}_{s}},
\\[1ex]
f\in \mathscr S'(\rr d),\ \fy \in \mathscr FC_0^\infty (K).
\end{multline}


\par

\begin{proof}[Proof of Proposition \ref{otpmimality1}]
We only prove the result in the case when $p_0'<\infty$. The modifications to the case
when $p_0'=\infty$ are left for the reader.

\par

First we assume that (1) holds, and prove that \eqref{lastineq2A} must hold.
By duality it suffices to consider the
case $j=1$ and $k=2$. Let $f_0\in C_0^\infty (B_2(0))$ be such that
$0\le f_0\le 1$, $f_0 (x)=1$ when $x\in B_1(0)$. Here $B_r(a)$ is the open ball
centered at $x=a$ and with radius $r$. Also let $x_0\in \rr d$,
$f_1(x)=f_0(x-x_0)$ and $f_2(x)=f_0(x+x_0)$. Then it follows by straight-forward
computations that
$$
f_1*f_2 =f_0*f_0
$$
is independent of $x_0$, and that
$$
\nm {f_j}{L^{p_j}_{t_j}}\asymp \eabs {x_0}^{t_j}.
$$
In particular, $\nm {f_1*f_2}{L^{p_0'}_{-t_0}}>0$ is independent of $x_0$.

\par

Now if $(f_1,f_2)\mapsto f_1*f_2$ is continuous from $L^{p_1}_{t_1}\times
L^{p_2}_{t_2}$ to $L^{p_0'}_{-t_0}$, the inequality $\nm {f_1*f_2}{L^{p_0'}_{-t_0}}
\lesssim \nm {f_1}{L^{p_1}_{t_1}}\nm {f_2}{L^{p_2}_{t_2}}$ in combination
with the previous estimates imply that
$$
C\le \eabs {x_0}^{t_1+t_2},
$$
for some constant $C>0$ which is independent of $x_0\in \rr d$. By letting
$|x_0|$ tend to infinity, it follows from the latter relation that $t_1+t_2\ge 0$.
This proves that \eqref{lastineq2A} holds. If instead (2) or (3) hold, then
the same arguments show that \eqref{lastineq2A} still must hold.

\par

It remains to prove that \eqref{lastineq2B} must be true. Again we first consider
the case when (1) is true. By the first part
of the proof it follows that at least two of $t_0,t_1,t_2$ are non-negative,
and we may assume that $t_1\ge 0$, by duality. Let $\alpha \in (0,1]$, and
let $f_j(x)=\eabs x^{-t_j}e^{-\alpha |x|^2}$, $j=1,2$. Then
\begin{equation}\label{fjnorms}
\nm {f_j}{L_{t_j}^{p_j}}\asymp \alpha ^{-d/(2p_j)}.
\end{equation}

\par

Furthermore, if
$$
\Omega _x=\sets {y\in \rr d}{\eabs x/4\le |y|\le \eabs x/2},
$$
then $|x-y|\le 3\eabs x/2$, giving that
$$
-\alpha |y|^2\ge -\frac \alpha 4 -\frac \alpha 4\cdot |x|^2 \quad \text{and}
\quad -\alpha |x-y|^2\ge -\frac {9\alpha} 4 -\frac {9\alpha} 4\cdot |x|^2.
$$
Since $t_1\ge 0$ and $0<\alpha \le 1$ we obtain
\begin{multline}\label{gestimate}
g(x)\equiv (f_1*f_2)(x)\gtrsim \int _{\Omega _x}f_1(x-y)f_2(y)\, dy
\\[1ex]
\gtrsim \eabs x^{-t_1-t_2}\int _{\Omega _x}e^{-9\alpha |x|^2/4 -\alpha
|x|^2/4}\, dy
\\[1ex]
\gtrsim \eabs x^{-t_1-t_2} e^{-3\alpha |x|^2}\int _{\Omega _x} \, dy
\asymp
\eabs x^{d-t_1-t_2} e^{-3\alpha |x|^2}.
\end{multline}
Hence, if $h(x)=\eabs x^{d-t_1-t_2} e^{-3\alpha |x|^2}$, then
$\nm g{L_{-t_0}^{p_0'}}\gtrsim \nm h{L_{-t_0}^{p_0'}}$.
We need to estimate the right-hand side from below. Let $t=t_0+t_1+t_2$. Then
the result follows if we prove that $t\ge d\cdot \masfR (p)$.

\par

We have
\begin{multline}\label{hnormests}
\nm h{L_{-t_0}^{p_0'}}^{p_0'} \asymp \int (1+|x|)^{p_0'(d-t)}
e^{-3\alpha p_0'|x|^2}\, dx
\\[1ex]
\asymp \int _0^\infty r^{d-1}(1+r)^{p_0'(d-t)}e^{-3\alpha p_0'r^2}\, dr
\\[1ex]
\asymp \alpha ^{-d/2}\int _0^\infty r^{d-1}\left (1+\frac r{\alpha ^{1/2}}\right )
^{p_0'(d-t)}e^{-r^2}\, dr
\\[1ex]
\gtrsim \alpha ^{-d/2}\int _1^\infty r^{d-1}\left (1+\frac r{\alpha ^{1/2}}\right )
^{p_0'(d-t)}e^{-r^2}\, dr.
\end{multline}
Since $0<\alpha \le 1$ we get $1+r/\alpha ^{1/2}\asymp r/\alpha ^{1/2}$ when
$r\ge 1$. Hence \eqref{hnormests} gives
\begin{multline*}
\nm h{L_{-t_0}^{p_0'}}^{p_0'} \gtrsim
\alpha ^{-d/2}\int _1^\infty r^{d-1}\left (\frac r{\alpha ^{1/2}}\right )
^{p_0'(d-t)}e^{-r^2}\, dr
\\[1ex]
= \alpha ^{-(d(p_0'+1)-p_0't)/2}\int _1^\infty r^{d-1+p_0'(d-t)}e^{-r^2}\, dr
\\[1ex]
\asymp \alpha ^{-(d(p_0'+1)-p_0't)/2}.
\end{multline*}
That is
\begin{equation}\label{hnormests2}
\nm {f_1*f_2}{L_{-t_0}^{p_0'}}\gtrsim \alpha ^{-(d(1+1/p_0')-t)/2}
=
\alpha ^{-(d(2-1/p_0)-t)/2}.
\end{equation}

\par

By the assumptions we have
$$
\nm {g_1*g_2}{L^{p_0'}_{-t_0}}\lesssim \nm {g_1}{L_{t_1}^{p_1}}
\nm {g_2}{L_{t_2}^{p_2}},
$$
for every $g_1,g_2\in \mathscr S$. Hence \eqref{fjnorms} and
\eqref{hnormests2} give
\begin{equation}\label{alpha-q-relation}
\alpha ^{-(d(2-1/p_0')-t)/2}\lesssim \alpha ^{-d(1/p_1+1/p_2)/2},
\end{equation}
when $0<\alpha \le 1$. If we let $\alpha$ tend to zero, then it follows from
\eqref{alpha-q-relation} that
\begin{equation}\label{q-relation2}
d\cdot \left ( 2-\frac 1{p_0}\right ) -t\le d\cdot \left (\frac 1{p_1}+\frac 1{p_2}
\right ).
\end{equation}
Since $t=t_0+t_1+t_2$, the last relation is the same as \eqref{lastineq2B},
and the assertion follows.

\par

It remains to prove \eqref{lastineq2B} when (2) or (3) hold. We choose
$K=\overline{B_1(0)}$ and an element $0\le \fy \in \mathscr FC_0^\infty (K)$
such that $\fy (x)>0$ when $x\in K$. In order to find such element $\fy$, we
first let $0\le \psi \in C_0^\infty (B_{1/2}(0))$ be rotation invariant and such that
$\psi (0)>0$. Then $\fy =\mathscr F(\psi *\psi )$ satisfies the desired properties.

\par

Assume that $(f_1,f_2)\mapsto f_1*f_2$ is continuous from $M^{p_1,q_1}_{s_1,t_1}
\times M^{p_2,q_2}_{s_2,t_2}$ to $M^{p_0',q_0'}_{-s_0,-t_0}$, or from
$W^{p_1,q_1}_{s_1,t_1} \times W^{p_2,q_2}_{s_2,t_2}$ to
$W^{p_0',q_0'}_{-s_0,-t_0}$. If $\fy _0=\fy *\fy \ge 0$, then it follows from
\eqref{Fsuppidentities} that
\begin{equation}\label{continuityrel2}
\nm {f_1*f_2*\fy _0}{L^{p_0'}_{-t_0}}\lesssim
\nm {f_1*\fy }{L^{p_1}_{t_1}} \nm {f_2*\fy }{L^{p_2}_{t_2}}.
\end{equation}

\par

Now let $f_j(x)=\eabs x^{-t_j}e^{-\alpha |x|^2}$ as before. Then \eqref{fjnorms}
and Young's inequality gives
\begin{equation}\tag*{(\ref{fjnorms})$'$}
\nm {f_j*\fy}{L^{p_j}_{t_j}} \lesssim \alpha ^{-d/(2p_j)}.
\end{equation}
Furthermore, since $\psi \ge 0$ is non-zero in $\overline{B_1(0)}$, it follows from
\eqref{gestimate} that
$$
f_1*f_2*\fy _0 \gtrsim h*\psi _0,
$$
where $h=\eabs \cdo ^{d-t_1-t_2}e^{-3\alpha |\cdo |^2}$ are
the same as before, and $\psi _0=\fy _0$ in $B_1(0)$ and $\psi _0=0$ otherwise.

\par

Since $\eabs {x-y}^{d-t_t-t_2}\asymp \eabs x^{d-t_1-t_2}$ and
$e^{-3\alpha |x-y |^2}\gtrsim e^{-6\alpha |x |^2}$ when $|y|\le 1$, we get
$$
(f_1*f_2*\fy _0 )(x)\gtrsim \eabs x^{d-t_1-t_2} e^{-6\alpha |x |^2}.
$$
By the same arguments as in the proof of \eqref{hnormests2} we now obtain
\begin{equation}\tag*{(\ref{hnormests2})$'$}
\nm {f_1*f_2*\fy _0}{L_{-t_0}^{p_0'}}\gtrsim \alpha ^{-(d(2-1/p_0)-t)/2}.
\end{equation}

\par

A combination of \eqref{continuityrel2}, \eqref{fjnorms}$'$ and \eqref{hnormests2}$'$
now gives \eqref{alpha-q-relation} which in turn lead to \eqref{q-relation2} or equivalently
to \eqref{lastineq2B}. The proof is complete.
%
%
\end{proof}


\par

\end{document}